%

\input ./style/arxiv-general.cfg
\documentclass[aap,MSNbibl,nameyear,dvips]{arximspdf}
\makeatletter
   \@ifpackageloaded{graphicx}{}{\usepackage{graphicx}}
\makeatother

%

\doi{10.1214/14-AAP1067} 
\volume{25}
\issue{6}
\pubyear{2015}
\firstpage{3047}
\lastpage{3094}
\docsubty{FLA}

\makeatletter
\newcommand{\rrvert}{\vert}
\newcommand{\rrVert}{\Vert}
\newcommand{\llvert}{\vert}
\newcommand{\llVert}{\Vert}
\newcommand{\IP}{\mathbb{P}}
\newcommand{\R}{\mathbb{R}}
\newcommand{\C}{\mathbb{C}}
\newcommand{\N}{\mathbb{N}}
\newcommand{\IS}{\mathbb{S}}
\newcommand{\IT}{\mathbb{T}}
\newcommand{\IA}{\mathbb{A}}
\newcommand{\ga}{\alpha}
\newcommand{\gb}{\beta}
\newcommand{\gd}{\delta}
\renewcommand{\gg}{\gamma}
\newcommand{\gk}{\kappa}
\newcommand{\gs}{\sigma}
\newcommand{\gO}{\Omega}
\newcommand{\cA}{\mathcal{A}}
\newcommand{\cB}{\mathcal{B}}
\newcommand{\cF}{\mathcal{F}}
\newcommand{\cH}{\mathcal{H}}
\newcommand{\cN}{\mathcal{N}}
\newcommand{\cY}{\mathcal{Y}}
\newcommand{\E}{\mathbb{E}} 
\newcommand{\Span}{\operatorname{span}}
\newcommand{\trace}{\operatorname{Tr}}
\newcommand{\SO}{\mathsf{SO}}
\let\Re\relax
\newcommand{\Re}{\operatorname{Re}}
\let\Im\relax
\newcommand{\Im}{\operatorname{Im}}
\newtheorem{lemma}{Lemma}[section]
\newtheorem{proposition}[lemma]{Proposition}
\newtheorem{theorem}[lemma]{Theorem}
\newtheorem{corollary}[lemma]{Corollary}
\newproclaim{remark}[lemma]{Remark}
\newproclaim{definition}[lemma]{Definition}
\newproclaim{assumption}[lemma]{Assumption}
\makeatother

\begin{document}
\begin{frontmatter}

\title{Isotropic Gaussian random fields on the sphere: Regularity,
fast simulation and stochastic partial~differential equations}
\runtitle{Isotropic GRFs on the sphere}

\begin{aug}
\author[A]{\fnms{Annika}~\snm{Lang}\corref{}\thanksref{T1}\ead[label=e1]{annika.lang@chalmers.se}}
\and
\author[B]{\fnms{Christoph} \snm{Schwab}\thanksref{T2}\ead[label=e2]{schwab@math.ethz.ch}}
\runauthor{A. Lang and Ch. Schwab}
\affiliation{Chalmers University of Technology, University of
Gothenburg and ETH Z\"urich,
and ETH Z\"urich}
\address[A]{Department of Mathematical Sciences\\
Chalmers University of Technology \& \\
University of Gothenburg\\
S--412 96 G\"oteborg\\
Sweden\\
and\\
Seminar f\"ur Angewandte Mathematik\\
ETH Z\"urich\\
R\"amistrasse 101\\
CH--8092 Z\"urich\\
Switzerland\\
\printead{e1}}
\address[B]{Seminar f\"ur Angewandte Mathematik\\
ETH Z\"urich\\
R\"amistrasse 101\\
CH--8092 Z\"urich\\
Switzerland\\
\printead{e2}}
\end{aug}
\thankstext{T1}{Supported in part by ERC AdG no.~247277 and by the
Knut and Alice Wallenberg foundation.}
\thankstext{T2}{Supported in part by ERC AdG no.~247277.}

\received{\smonth{5} \syear{2013}}
\revised{\smonth{5} \syear{2014}}

%
\begin{abstract}
Isotropic Gaussian random fields on the sphere are characterized by
Kar\-hunen--Lo\`eve expansions with respect to the spherical harmonic
functions and the angular power spectrum.
The smoothness of the covariance is connected to the decay of
the angular power spectrum and the relation to sample H\"older
continuity and sample differentiability of the random fields is discussed.
Rates of convergence of their finitely
truncated Kar\-hunen--Lo\`eve expansions in terms of the covariance
spectrum are established,
and algorithmic aspects of fast sample generation
via fast Fourier transforms on the sphere are indicated.
The relevance of the results on sample regularity
for isotropic Gaussian random fields and the corresponding lognormal
random fields on the sphere for several models from environmental
sciences is indicated.
Finally, the stochastic heat equation on the sphere driven by additive,
isotropic Wiener
noise is considered, and strong convergence rates for spectral
discretizations based on the spherical harmonic functions are proven.
\end{abstract}

%
\begin{keyword}[class=AMS]
\kwd[Primary ]{60G60}
\kwd{60G17}
\kwd{41A25}
\kwd{60H15}
\kwd{65C30}
\kwd{65N30}
\kwd[; secondary ]{60H35}
\kwd{60G15}
\kwd{33C55}
\end{keyword}
\begin{keyword}
\kwd{Gaussian random fields}
\kwd{isotropic random fields}
\kwd{Kar\-hunen--Lo\`eve expansion}
\kwd{spherical harmonic functions}
\kwd{Kolmogorov--Chentsov theorem}
\kwd{sample H\"older continuity}
\kwd{sample differentiability}
\kwd{stochastic partial differential equations}
\kwd{spectral Galerkin methods}
\kwd{strong convergence rates}
\end{keyword}
\end{frontmatter}

\section{Introduction}\label{secintro}\label{sec1}
Sample regularity of Gaussian random fields (GRFs) on
subsets of Euclidean space is well studied, where
the spectral theory of these fields is used; see, for example,
\citeauthor{Yaglom2ndOrd} (\citeyear{Yaglom2ndOrd,YaglomI,YaglomII}).
However,
the general theory of second-order random fields as developed
in \citeauthor{Yaglom2ndOrd} (\citeyear{Yaglom2ndOrd,YaglomI,YaglomII})
requires a group structure on the space
of realizations. The\vspace*{1pt} (practically relevant) case
of GRFs indexed by the sphere, which we denote by~$\IS^2$ (and, more
generally, $\IS^{2n}$),
takes a special role with regard to invariance under
(topological) group actions [see, e.g.,~\citet{Megia07} and the references
there for a lucid discussion], so that the general results
in~\citet{Yaglom2ndOrd} do not apply directly.
Due to the relevance of GRFs on~$\IS^2$ in applications,
in particular in environmental modeling and cosmological data analysis
[cp.~\citet{MP11}],
it is of some interest to develop a theory
of sample regularity, stochastic partial differential equations and
their numerical analysis.
The contribution of some basic results with direct proofs
as well as the corresponding results on higher-dimensional spheres~$\IS^{d-1}$
is the purpose of the present paper.

Specifically, we derive the connection between the smoothness
of the covariance kernel of an isotropic GRF on~$\IS^2$ and the decay
of its angular power spectrum and characterize its $\IP$-a.s. sample
H\"older continuity and sample differentiability. Furthermore we
construct isotropic $Q$-Wiener processes using isotropic GRFs.
We solve the stochastic heat equation on~$\IS^2$ driven by isotropic
$Q$-Wiener noise with a series expansion with respect to the spherical
harmonic functions. We show that the convergence rate of the fully
discrete approximation scheme given by the truncation of the series
expansion depends only on the decay of the angular power spectrum and
that it is independent of the chosen space and time discretization.

The outline of this paper is as follows:
in Section~\ref{seciGRF} we recapitulate basic definitions
of isotropic GRFs on~$\IS^2$ and of the Kar\-hunen--Lo\`eve expansions
in spherical harmonic functions of these fields from~\citet{MP11}.
A characterization of
the decay of the angular power spectrum
of isotropic GRFs in terms of the regularity of the covariance kernel
in a scale of weighted Sobolev spaces on~$\IS^2$ is presented in
Section~\ref{secPowSpecDec}.
Section~\ref{secHoeldercont} contains a version of the
Kolmogorov--Chentsov theorem for random fields on~$\IS^2$, and
therefore sample H\"older continuity of random fields is addressed.
Sufficient conditions on the angular power spectrum
are presented for
$\IP$-a.s. sample H\"older continuity and differentiability of
isotropic GRFs.
In Section~\ref{secapproxiGRF} we approximate isotropic Gaussian
random fields by finite truncation of their Kar\-hunen--Lo\`eve
expansions. We discuss convergence rates of these approximations in
$p$th moment and in the $\IP$-a.s. sense.
The topic of Section~\ref{sechailstones} is the \hyperref[sec1]{Introduction} of the
practically important case of lognormal random fields. These are
crucial in a number of applications, in particular in meteorology
and in climate modeling. In this section, we give analogous results to
Section~\ref{secHoeldercont}; that is, sample regularity of lognormal
random fields in terms of H\"older continuity and differentiability is
addressed.
Finally, isotropic $Q$-Wiener processes are introduced in Section~\ref
{secstochheateqn}. We consider the stochastic heat equation on~$\IS^2$
driven by an isotropic $Q$-Wiener process and solve the stochastic
partial differential equation (SPDE) with spectral methods. We
approximate the solution by truncation of the derived spectral
representation and show convergence rates in $p$th moment as well as
$\IP$-almost surely.
These results are illustrated\vspace*{1pt} by numerical examples.
Although the main focus of the paper is the unit sphere $\IS^2$ due to
its relevance in applications, Sections~\ref{seciGRF}--\ref
{sechailstones} also include the corresponding results for
higher-dimensional spheres~$\IS^{d-1}$.

\section{Isotropic Gaussian random fields on the sphere}\label{seciGRF}
In this section we introduce isotropic Gaussian random fields and their
properties. We focus especially on Kar\-hunen--Lo\`eve expansions of
these random fields.
In doing so,
we follow closely the introduction of Gaussian random fields in
Chapter~5 of~\citet{MP11}.
We will first focus on Gaussian random fields on the unit sphere
embedded into $\R^3$ before we give a short review of Gaussian random
fields on unit sphere in arbitrary dimensions.
Throughout, we denote by $(\gO, \cA, \IP)$ a probability space and
write $\IS^2$ for the unit sphere in~$\R^3$,
that is,
\[
\IS^2 = \bigl\{ x \in\R^3, \llVert x\rrVert= 1\bigr\},
\]
where $\llVert \cdot\rrVert $ denotes the Euclidean norm.
Let $(\IS^2,d)$ be the compact metric space with the
geodesic metric given by
\[
d(x,y) = \arccos\langle x,y \rangle_{\R^3}
\]
for all $x,y \in\IS^2$.
We denote by $\cB(\IS^2)$ the Borel $\gs$-algebra of~$\IS^2$.

\begin{definition}
A $\cA\otimes\cB(\IS^2)$-measurable mapping $T\dvtx\Omega\times\IS^2
\rightarrow\R$ is called
a \emph{real-valued random field} on the unit sphere.

The random field $T$ is called \emph{strongly isotropic} if for all $k
\in\N$, $x_1,\ldots, x_k \in\IS^2$
and for $g \in\SO(3)$, the multivariate random variables
$(T(x_1), \ldots, T(x_k))$ and $(T(gx_1), \ldots, T(gx_k))$
have the same law, where $\SO(3)$ denotes the group of rotations
on~$\IS^2$.

It is called \emph{$n$-weakly isotropic} for $n \ge2$
if $\E(\llvert T(x)\rrvert ^n) < + \infty$ for all $x \in\IS^2$ and
if for $1 \le k \le n$, $x_1, \ldots,x_k \in\IS^2$ and $g \in\SO(3)$
\[
\E\bigl(T(x_1) \cdots T(x_k)\bigr) = \E\bigl(T(g
x_1) \cdots T(g x_k)\bigr).
\]

Furthermore it is called \emph{Gaussian} if for all $k \in\N$, $x_1,
\ldots, x_k \in\IS^2$
the multivariate random variable $(T(x_1),\ldots,T(x_k))$ is multivariate
Gaussian distributed; that is, $\sum_{i=1}^k a_i T(x_i)$ is a normally
distributed random variable
for all $a_i \in\R$, $i=1,\ldots,k$.
\end{definition}
In what follows, we focus on real-valued random fields.
Similarly to a Gaussian random field (GRF for short) on~$\R^d$, $d \in
\N$,
a GRF on~$\IS^2$ has the following property
proven, for example, in Proposition~5.10(3) in~\citet{MP11}.

\begin{proposition}
Let $T$ be a GRF on~$\IS^2$.
Then $T$ is strongly isotropic if and only if $T$ is $2$-weakly isotropic.
\end{proposition}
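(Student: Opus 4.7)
The plan is to prove the two implications separately, exploiting the fact that the finite-dimensional distributions of a Gaussian random field are completely determined by the first two moments.

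For the direction ``strongly isotropic $\Rightarrow$ $2$-weakly isotropic'', I would first observe that for every $x\in\IS^2$ the random variable $T(x)$ is (univariate) Gaussian, hence possesses moments of all orders; in particular $\E(|T(x)|^n)<\infty$ for every $n$, so the integrability condition in the definition of $n$-weak isotropy is automatic. The equality of $\E(T(x_1)\cdots T(x_k))$ and $\E(T(gx_1)\cdots T(gx_k))$ for $k=1,2$ then follows immediately from the equality in law of $(T(x_1),\ldots,T(x_k))$ and $(T(gx_1),\ldots,T(gx_k))$ guaranteed by strong isotropy, since these moments are continuous functionals of the joint distribution (and finite, by the Cauchy--Schwarz inequality and the finiteness of the second moment).

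For the converse, assume $T$ is Gaussian and $2$-weakly isotropic. The key point is that the joint law of $(T(x_1),\ldots,T(x_k))$ is multivariate Gaussian by the Gaussian assumption, and hence is determined by its mean vector $m_i := \E(T(x_i))$ and its covariance matrix $\Sigma_{ij} := \E(T(x_i)T(x_j)) - m_i m_j$. For any $g\in\SO(3)$ the random vector $(T(gx_1),\ldots,T(gx_k))$ is also multivariate Gaussian, because every linear combination $\sum_i a_i T(gx_i)$ is the evaluation of $T$ at the points $gx_i\in\IS^2$ with coefficients $a_i$, which is Gaussian by the Gaussian property of $T$. Applying the $k=1$ and $k=2$ parts of $2$-weak isotropy yields
\begin{equation*}
\E(T(gx_i)) = \E(T(x_i)) = m_i, \qquad
\E(T(gx_i)T(gx_j)) = \E(T(x_i)T(x_j)),
\end{equation*}
so the two Gaussian vectors share mean vector $m$ and covariance matrix $\Sigma$, and hence have identical laws. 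Since $k$ and the $x_i$ were arbitrary, $T$ is strongly isotropic.

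I do not anticipate a genuine obstacle: the only subtle point is verifying that $(T(gx_1),\ldots,T(gx_k))$ is itself jointly Gaussian (not just marginally), but this is immediate from the definition of a Gaussian random field, which requires every finite linear combination of values $T(y)$ to be normal, applied to the rotated points $y_i = gx_i\in\IS^2$. Once this is in place, the proof reduces to the standard fact that a multivariate normal distribution is determined by its first two moments.
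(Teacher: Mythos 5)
Your proof is correct: both directions are handled properly, and you rightly identify the only delicate point, namely that $(T(gx_1),\ldots,T(gx_k))$ is jointly Gaussian because the rotated points $gx_i$ again lie in $\IS^2$, after which the claim reduces to the fact that a multivariate normal law is determined by its mean vector and covariance matrix. The paper does not give its own proof of this proposition but cites Proposition~5.10(3) in~\cite{MP11}, and your argument is exactly the standard one used there, so there is nothing to add.
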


A key role in our analysis and simulation
of isotropic GRFs on $\IS^2$ is
taken by their Kar\-hunen--Lo\`eve expansions.
To introduce Kar\-hunen--Lo\`eve expansions of isotropic GRFs (and the
corresponding $Q$-Wiener processes on~$\IS^2$ in the formulation
of SPDEs on~$\IS^2$ in Section~\ref{secstochheateqn}), we first
define spherical harmonic functions on~$\IS^2$. 
We recall that the \emph{Legendre polynomials}
$(P_\ell, \ell\in\N_0)$ are, for example,
given by Rodrigues's formula [see, e.g., \citet{Szego}]
\[
P_\ell(\mu):= 2^{-\ell} \frac{1}{\ell!}
\frac{\partial^\ell}{\partial\mu
^\ell} \bigl(\mu^2 -1\bigr)^\ell
\]
for all $\ell\in\N_0$ and $\mu\in[-1,1]$.
The Legendre polynomials define the \emph{associated Legendre
functions} $(P_{\ell m}, \ell\in\N_0,m=0,\ldots,\ell)$ by
\[
P_{\ell m}(\mu):= (-1)^m \bigl(1-\mu^2
\bigr)^{m/2} \frac{\partial^m}{\partial\mu^m} P_\ell(\mu)
\]
for $\ell\in\N_0$, $m = 0,\ldots,\ell$ and $\mu\in[-1,1]$.
Here and throughout we do not separate indices for doubly subscripted
functions and coefficients by a comma with the understanding
that the reader will recognize double indices as such.
With this in mind, we further introduce
the \emph{surface spherical harmonic functions}
$\cY:= (Y_{\ell m}, \ell\in\N_0, m=-\ell, \ldots, \ell)$
as mappings
$Y_{\ell m}\dvtx [0,\pi] \times[0,2\pi) \rightarrow\C$,
which are given by
\[
Y_{\ell m}(\vartheta, \varphi):= \sqrt{\frac{2\ell+ 1}{4\pi}
\frac{(\ell-m)!}{(\ell+m)!}} P_{\ell m}(\cos\vartheta) e^{im\varphi}
\]
for $\ell\in\N_0$, $m = 0,\ldots, \ell$ and $(\vartheta,\varphi)
\in[0,\pi] \times[0,2\pi)$ and by
\[
Y_{\ell m}:= (-1)^m \overline{Y_{\ell-m}}
\]
for $\ell\in\N$ and $m=-\ell, \ldots,-1$.
By the Peter--Weyl theorem [see,\vspace*{1pt} e.g., Proposition~3.29 in~\citet{MP11}],
$\cY$ is an orthonormal basis of $L^2(\IS^2;\C)$
which we abbreviate by $L^2(\IS^2)$.
Every real-valued function~$f$ in~$L^2(\IS^2)$
admits the spherical harmonics series expansion
\[
f = \sum_{\ell=0}^\infty\sum
_{m=-\ell}^\ell f_{\ell m} Y_{\ell m}
\]
and the coefficients satisfy [cp., e.g., Remark~3.37 in~\citet{MP11}]
\[
f_{\ell m} = (-1)^m \overline{f_{\ell-m}};
\]
that is, $f$ can be represented in $L^2(\IS^2)$ by the series expansion
\[
f = \sum_{\ell=0}^\infty\Biggl(
f_{\ell0} Y_{\ell0} + 2 \sum_{m=1}^\ell
(\Re f_{\ell m} \Re Y_{\ell m} - \Im f_{\ell
m} \Im
Y_{\ell m} ) \Biggr).
\]
In what follows we set for $y \in\IS^2$
\[
Y_{\ell m}(y):= Y_{\ell m}(\vartheta,\varphi),
\]
where $y = (\sin\vartheta\cos\varphi, \sin\vartheta\sin\varphi,
\cos\vartheta)$; that is, we identify (with a slight abuse of notation)
Cartesian and angular coordinates of the point $y\in\IS^2$.
Furthermore we denote by $\gs$ the \emph{Lebesgue measure on the sphere}
which admits the representation
\[
d\gs(y) = \sin\vartheta \,d\vartheta \,d\varphi
\]
for $y \in\IS^2$, $y = (\sin\vartheta\cos\varphi, \sin\vartheta
\sin\varphi, \cos\vartheta)$.

We define the \emph{spherical Laplacian}, also called
\emph{Laplace--Beltrami operator}, in terms of spherical coordinates
similarly to Section~3.4.3 in~\citet{MP11}
by
\[
\Delta_{\IS^2}:= (\sin\vartheta)^{-1} \frac{\partial}{\partial\vartheta}
\biggl( \sin\vartheta\frac{\partial}{\partial\vartheta} \biggr) + (\sin
\vartheta)^{-2}
\frac{\partial^2}{\partial\varphi^2}.
\]
It is well known [see, e.g., Theorem~2.13 in~\citet{M98}]
that the spherical harmonic functions~$\cY$
are the eigenfunctions of~$\Delta_{\IS^2}$ with
eigenvalues $(-\ell(\ell+1), \ell\in\N_0)$, that is,
\[
\Delta_{\IS^2} Y_{\ell m} = - \ell(\ell+1) Y_{\ell m}
\]
for all $\ell\in\N_0$, $m = -\ell, \ldots,\ell$.
Furthermore it is shown in Theorem~2.42 of~\citet{M98} that
$L^2(\IS^2)$
has the direct sum decomposition
\[
L^2\bigl(\IS^2\bigr) = \bigoplus
_{\ell=0}^\infty\cH_\ell\bigl(
\IS^2\bigr),
\]
where the spaces $(\cH_\ell, \ell\in\N_0)$ are
spanned by spherical harmonic functions
\[
\cH_\ell\bigl(\IS^2\bigr):= \Span\{ Y_{\ell m}, m
= -\ell,\ldots,\ell\};
\]
that is, $\cH_\ell(\IS^2)$ denotes the space of eigenfunctions
of~$\Delta_{\IS^2}$
that correspond to the eigenvalue $-\ell(\ell+1)$ for $\ell\in\N_0$.

The significance of the spherical harmonic functions
lies in the fact that every $2$-weakly isotropic
random field admits a convergent
Kar\-hunen--Lo\`eve expansion.
The following result, which is proven in Theorem~5.13 in~\citet
{MP11} and a version of the Peter--Weyl theorem, makes this precise.

\begin{theorem}\label{thmKLexpansion}
Let $T$ be a $2$-weakly isotropic random field on $\IS^2$;
then the following statements hold true:
\begin{longlist}[(3)]
\item[(1)] $T$ satisfies $\IP$-almost surely
\[
\int_{\IS^2} T(x)^2 \,d\gs(x) < + \infty.
\]
\item[(2)] $T$ admits a Kar\-hunen--Lo\`eve expansion
%
\begin{equation}
\label{eqKLRF} T = \sum_{\ell=0}^\infty\sum
_{m=-\ell}^\ell a_{\ell m}
Y_{\ell m}
\end{equation}
with
\[
a_{\ell m} = \int_{\IS^2} T(y) \overline{Y_{\ell m}}(y)
\,d\gs(y)
\]
for $\ell\in\N_0$ and $m \in\{-\ell,\ldots, \ell\}$.
\item[(3)] Series expansion~(\ref{eqKLRF}) converges in $L^2(\gO
\times\IS^2;\R)$; that is,
\[
\lim_{L \rightarrow\infty} \E\Biggl( \int_{\IS^2}
\Biggl(T(y) - \sum_{\ell=0}^L \sum
_{m=-\ell
}^\ell a_{\ell m} Y_{\ell m}(y)
\Biggr)^2 \,d\gs(y) \Biggr) = 0.
\]
\item[(4)]
The series expansion~(\ref{eqKLRF}) converges in $L^2(\gO;\R)$ for
all $x \in\IS^2$; that is,
for all $x \in\IS^2$,
\[
\lim_{L \rightarrow\infty} \E\Biggl( \Biggl(T(x) - \sum
_{\ell=0}^L \sum_{m=-\ell}^\ell
a_{\ell m} Y_{\ell m}(x)\Biggr)^2 \Biggr) = 0.
\]
\end{longlist}
\end{theorem}

This result implies that every
$2$-weakly isotropic random field is an element of~$L^2(\gO;L^2(\IS^2))$.
For the efficient computational simulation of
$2$-weakly isotropic Gaussian random fields, which we will call in
the following just isotropic Gaussian random fields,
we will exploit special properties
of the random coefficients $\IA:= (a_{\ell m}, \ell\in\N_0, m=-\ell
,\ldots,\ell)$.
It turns out that the properties are similar to those of invariant
GRFs on the torus with Fourier series expansions; see, for example,
\citet{LP11}.
First of all we have by Remark~6.4, Proposition~6.6 and equation~(6.6)
in~\citet{MP11}
the following lemma.

\begin{lemma}\label{lempropalm}
Let $T$ be a strongly isotropic random field on~$\IS^2$ with Karhunen--Lo\`eve coefficients~$\IA$.
The elements of the sequence~$\IA$ are, except for $a_{00}$,
centered random variables, that is, $\E(a_{\ell m}) = 0$
for all $\ell\in\N$ and $m=-\ell,\ldots,\ell$.
Furthermore there
exists a sequence
$(A_\ell, \ell\in\N_0)$ of nonnegative real numbers
such that
\[
\E(a_{\ell_1 m_1} \overline{a_{\ell_2 m_2}}) = A_{\ell_1}
\gd_{\ell_1 \ell_2} \gd_{m_1 m_2}
\]
for $\ell_1,\ell_2 \in\N$ and $m_i = -\ell_i,\ldots,\ell_i$, $i=1,2$,
where $\gd_{n m} = 1$ if $n=m$ and zero otherwise.
For the first element $a_{00}$, it holds that
%
\[
\E(a_{00} \overline{a_{\ell m}}) = \bigl(A_0 +
\E(a_{00})^2\bigr) \gd_{0\ell} \gd_{0m}.
\]
The sequence $(A_\ell, \ell\in\N_0)$ is called
the \emph{angular power spectrum of~$T$}.

The random variables $a_{\ell m}$ and $a_{\ell-m}$
satisfy for $\ell\in\N$ and $m=1,\ldots,\ell$ that
\[
a_{\ell m} = (-1)^m \overline{a_{\ell-m}}.
\]
\end{lemma}

In the case of interest in this manuscript that $T$ is an
isotropic GRF, Theorem~6.12 in~\citet{MP11}
implies that
$\IA_+:= (a_{\ell m}, \ell\in\N_0, m = 0,\ldots,\ell)$
is a sequence of independent, complex-valued, Gaussian random variables.
By Proposition~6.8 in~\citet{MP11},
the elements of $\IA_+$ for $m \neq0$ satisfy
that $\Re a_{\ell m}$ and $\Im a_{\ell m}$
are symmetric random variables that are equal in law, uncorrelated,
that is, $\E(\Re a_{\ell m} \Im a_{\ell m})=0$, and
that have variance
\[
\E\bigl((\Re a_{\ell m})^2\bigr) = \E\bigl((\Im
a_{\ell m})^2\bigr) = A_\ell/2.
\]
By Lemma~\ref{lempropalm},
all elements of $\IA\setminus\IA_+$ 
can be obtained from $\IA_+$ via
\[
\Re a_{\ell m} = (-1)^m \Re a_{\ell-m}, \qquad\Im
a_{\ell m} = (-1)^{m+1} \Im a_{\ell-m}
\]
for $\ell\in\N$ and $m = -\ell, \ldots,-1$.
Furthermore we deduce from Propositions~6.11, 6.6 and
equation~(6.12) in~\citet{MP11} and
from Lemma~\ref{lempropalm} above
that
$\Re a_{\ell0}$ is $\cN(0,A_\ell)$ distributed; that is,
it is normally distributed with mean zero and variance~$A_\ell$, and
$\Im a_{\ell0} = 0$ for $\ell\in\N$ and that $\Re a_{0 0}$
is $\cN(\E(T)2\sqrt{\pi},A_0)$ distributed while $\Im a_{0 0} = 0$.

So, in conclusion, we have the following corollary.

\begin{corollary}\label{corKLexpiGRF}
Let $T$ be a $2$-weakly isotropic Gaussian random field on~$\IS^2$.
Then $T$ admits the Kar\-hunen--Lo\`eve expansion
\[
T = \sum_{\ell=0}^\infty\sum
_{m=-\ell}^\ell a_{\ell m}
Y_{\ell m},
\]
where $(Y_{\ell m}, \ell\in\N_0, m= - \ell, \ldots, \ell)$
is the sequence of spherical harmonic functions, and the sequence
$\IA:= (a_{\ell m}, \ell\in\N_0, m= - \ell, \ldots, \ell)$
is a sequence of complex-valued, centered, Gaussian random variables
with the following properties:
\begin{longlist}[(3)]
\item[(1)] $\IA_+:= (a_{\ell m}, \ell\in\N_0, m = 0,\ldots,\ell
)$ is a
sequence of independent, complex-valued Gaussian random variables.
\item[(2)] The elements of $\IA_+$ with $m>0$ satisfy
$\Re a_{\ell m}$ and $\Im a_{\ell m}$ are independent and $\cN
(0,A_\ell/2)$ distributed.
\item[(3)]
The elements of $\IA_+$ with $m=0$ are real-valued, and
the elements $\Re a_{\ell0}$ are $\cN(0,A_\ell)$
distributed for $\ell\in\N$ while $\Re a_{00}$ is $\cN(\E(T)2\sqrt
{\pi},A_0)$ distributed.
\item[(4)] The elements of~$\IA$ with $m <0$ are deduced from those
of $\IA_+$
by the formulas
\[
\Re a_{\ell m} = (-1)^m \Re a_{\ell-m}, \qquad\Im
a_{\ell m} = (-1)^{m+1} \Im a_{\ell-m}.
\]
\end{longlist}
\end{corollary}

Rather than
the specific case of~$\IS^2$, which is mainly relevant in applications,
we can also consider $\IS^2$ as a particular instance
of the unit sphere $\IS^{d-1}:= \{x \in\R^d, \llVert x\rrVert _{\R^d}
= 1\}$
embedded into $\R^d$ for some $d \ge2$.
%
The
angular distance~$d$ of two points $x$ and $y$ on~$\IS^{d-1}$ is given
in the same way as on~$\IS^2$ by $d(x,y) = \arccos\langle x,y\rangle
_{\R^d}$. Let us denote by $(S_{\ell m}, \ell\in\N_0, m=1,\ldots,h(\ell
,d))$ the spherical harmonics on~$\IS^{d-1}$, where
%
\[
h(\ell,d) = (2\ell+ d - 2) \frac{(\ell+d-3)!}{(d-2)! \ell!}.
\]
Using the framework of~\citet{Y83}, we call a $\cB(\IS
^{d-1})\times\cF$-measurable random field~$T$ on~$\IS^{d-1}$
isotropic if $\E(T(x))$ is constant for all $x \in\IS^{d-1}$,
without loss of generality $0$ and if the kernel of the covariance
$k_T(x,y) = \E(T(x)T(y))$ is given by a function of the distance
$d(x,y)$; that is, the distribution of the random field is invariant
under rotations. Then $T$ is mean square continuous by~\citet
{MP13}. It is shown in Section~5.1 in~\citet{Y83} that this
implies that $T$ admits a Karhunen--Lo\`eve expansion
\[
T(x) = \sum_{\ell=0}^\infty\sum
_{m=1}^{h(\ell,d)} a_{\ell m} S_{\ell m}(x),
\]
where $(a_{\ell m}, \ell\in\N_0,m=1,\ldots,h(\ell,d))$ is a
sequence of random variables that satisfy
\[
\E(a_{\ell m}) = 0, \qquad\E(a_{\ell m} a_{\ell' m'}) =
A_\ell\delta_{\ell\ell'} \delta_{m m'}
\]
for $\ell\in\N_0$ and $m=1,\ldots,h(\ell,d)$ and
\[
\sum_{\ell=0}^\infty A_\ell h(
\ell,d) < + \infty.
\]
The series converges with probability one and in $L^2(\gO;\R)$
as well as in $L^2(\gO;L^2(\IS^{d-1}))$.
If we assume further that $T$ is Gaussian, then the
random variables $(a_{\ell m}, \ell\in\N_0,m=1,\ldots,h(\ell,d))$
are independent,
and the convergence results extend to~$L^p(\gO;\R)$ and $L^p(\gO;L^2(\IS
^{d-1}))$, $p\ge1$.
Denoting by $(A_\ell, \ell\in\N_0)$ the angular power spectrum
for $\IS^{d-1}$ in analogy to what was done for~$\IS^2$,
there hold completely similar
properties for Gaussian isotropic random fields on~$\IS^{d-1}$.
%
\section{Decay of the angular power spectrum}\label{secPowSpecDec}
%

The error in a $\gk$-term truncation of the Kar\-hunen--Lo\`eve expansion
of an isotropic GRF~$T$ on~$\IS^2$
is closely related to the decay of the angular power spectrum
of~$T$.
As we show next, the decay of the angular power spectrum
is in turn characterized by the behavior of the
covariance kernel function that characterizes the isotropic
GRF~$T$.
Often the kernel function $k_T$ is prescribed
in applications.

To specify this relation,
we start with the definition of the kernel~$k_T$ of the
covariance of a centered isotropic Gaussian random field on~$\IS^2$
with prescribed
angular power spectrum $(A_\ell, \ell\in\N_0)$.
It is given for $x,y \in\IS^2$ by the formula
\begin{eqnarray*}
k_T(x,y)&:=& \E\bigl(T(x)T(y)\bigr)
\\
& = &\sum
_{\ell=0}^\infty A_\ell\sum
_{m=-\ell}^\ell Y_{\ell m}(x)
\overline{Y_{\ell m}}(y)
\\
& = &\sum_{\ell=0}^\infty A_\ell
\frac{2\ell+1}{4\pi} P_\ell\bigl(\langle x,y\rangle_{\R^3}\bigr).
\end{eqnarray*}
We observe that the covariance kernel~$k_T$
just depends on the inner product, respectively, the (spherical) distance.
Accordingly,
we denote by $k\dvtx[0,\pi] \rightarrow\R$ the kernel as a
function of the distance $r = d(x,y)$, that is,
\[
k(r):= \sum_{\ell=0}^\infty
A_\ell\frac{2\ell+1}{4\pi} P_\ell(\cos r)
\]
for $r \in[0,\pi]$.
A third way to look at the kernel is
in terms of the inner product $\langle x,y\rangle_{\R^3}$.
Therefore we define $k_I\dvtx[-1,1] \rightarrow\R$ by
\[
k_I(\mu):= k(\arccos\mu)
\]
for all $\mu\in[-1,1]$.
This implies overall for $x,y \in\IS^2$ that
\[
k_T(x,y) = k\bigl(d(x,y)\bigr) = k_I\bigl(\langle
x,y \rangle_{\R^3}\bigr).
\]
We will show that the regularity of the kernel is equivalent to the
weighted $2$-summability of
the angular power spectrum $(A_\ell, \ell\in\N_0)$, which can be
formalized in the framework of weighted Sobolev
spaces.

Therefore for $n \in\N_0$ let $H^n(-1,1) \subset L^2(-1,1)$ denote
the standard Sobolev spaces. We define
the function spaces $V^n(-1,1)$ as the closures
of $H^n(-1,1)$
with respect to the weighted norms
$\llVert \cdot\rrVert _{V^n(-1,1)}$ given by
\[
\llVert u \rrVert_{V^n(-1,1)}^2:= \sum
_{j=0}^n \llvert u \rrvert^2_{V^j(-1,1)}
,
\]
where for $j \in\N_0$ the seminorm $\llvert \cdot\rrvert
_{V^j(-1,1)}$ is defined by
\[
\llvert u \rrvert^2_{V^j(-1,1)}:= \int_{-1}^1
\biggl\llvert\frac{\partial^j}{\partial\mu^j} u(\mu) \biggr\rrvert^2
\bigl(1 -
\mu^2\bigr)^j \,d\mu.
\]
With this definition, $(V^n(-1,1), n \in\N_0)$ is a decreasing scale
of separable Hilbert spaces, that is,
\[
L^2(-1,1) = V^0(-1,1) \supset V^1(-1,1)
\supset\cdots\supset V^n(-1,1) \supset\cdots.
\]
By Ehrling's lemma the norm of $V^n(-1,1)$ is equivalent to the first
and the last element of the sum, that is,
\[
\llVert u \rrVert_{V^n(-1,1)}^2 \simeq\llVert u
\rrVert_{L^2(-1,1)}^2 + \llvert u \rrvert_{V^n(-1,1)}^2
\]
for all $u \in V^n(-1,1)$.
We will in the sequel not distinguish between these
norms by a separate notation.

In what follows we are deriving further equivalent norms of~$V^n(-1,1)$
in terms of summability of the spectrum. Therefore let us first observe
that any $u \in L^2(-1,1)$ can be expanded in the $L^2(-1,1)$
convergent Fourier--Legendre series
\[
u = \sum_{\ell=0}^\infty u_\ell
\frac{2\ell+1}{2} P_\ell
\]
with
\[
u_\ell:= \int_{-1}^1 u(x)
P_\ell(x) \,dx
\]
for all $\ell\in\N_0$.
Setting $ A_\ell:= 2\pi u_\ell$, we obtain that
\[
u = \sum_{\ell=0}^\infty A_\ell
\frac{2\ell+1}{4\pi} P_\ell;
\]
that is, $u$ is a valid kernel $k_I$.
So, instead of showing the equivalence of the regularity of the kernel
and the summability of the angular power spectrum, we can show an
isomorphism between the spaces $V^n(-1,1)$ and\vspace*{1pt} the weighted sequence
spaces $\ell_n:= \ell^2((\frac{2\ell+1}{2} (1+\ell^{2n}), \ell
\in\N_0))$, where $(\frac{2\ell+1}{2} (1+\ell^{2n}), \ell\in\N
_0)$ denotes the sequence of weights.
Since our goal is to extend this isomorphism to spaces $V^\eta(-1,1)$
with $\eta\notin\N_0$, we first extend the definition of the
weighted Sobolev spaces to nonintegers before we prove our main result.
We define for $n < \eta< n+1$ the interpolation space $V^\eta(-1,1)$
with the real method of interpolation in the sense of~\citet
{Triebel95} by
\[
V^\eta(-1,1):= \bigl( V^n(-1,1), V^{n+1}(-1,1)
\bigr)_{\eta-n,2}
\]
equipped with the norm
$\llVert \cdot\rrVert _{V^\eta(-1,1)}$
given by
\[
\llVert u \rrVert_{V^\eta(-1,1)}^2 = \int
_0^\infty t^{-2(\eta-n)} \bigl\llvert K(t,u)
\bigr\rrvert^2 \,\frac{dt}{t},
\]
where the $K$-functional is defined by
\[
K(t,u) = \inf_{u = v+w} \bigl( \llVert v \rrVert_{V^n(-1,1)}
+ t \llVert w \rrVert_{V^{n+1}(-1,1)} \bigr)
\]
for $t>0$.

The definition of the interpolation spaces~$\ell_\eta$ for $\eta
\notin\N_0$ is done similarly. The \emph{interpolation property} of
the spaces [see, e.g., step~4 in the proof of Theorem~1.3.3
in~\citet{Triebel95} or Proposition~2.4.1 in~\citet{T83}]
implies that the spaces $V^\eta(-1,1)$ and $\ell_\eta$ are
isomorphic for $\eta\in\R_+$ if this is true for $\eta\in\N_0$.

\begin{theorem}\label{thmBesovEquivNorm}
Let $u \in L^2(-1,1)$ and $\eta\in\R_+$ be given. Then $u \in V^\eta
(-1,1)$ if and only if
\[
\sum_{\ell=0}^\infty u_\ell^2
\frac{2\ell+1}{2} \bigl(1+\ell^{2\eta}\bigr) < + \infty;
\]
that is,
\[
\llVert u\rrVert_{V^\eta(-1,1)}^2 \simeq\sum
_{\ell=0}^\infty u_\ell^2
\frac{2\ell+1}{2} \bigl(1+\ell^{2\eta}\bigr)
\]
is an equivalent norm in~$V^\eta(-1,1)$.

For $k_I \in V^n(-1,1)$, $n \in\N_0$, this translates to the relation
that the sequence $(\ell^{n+1/2} A_\ell, \ell\ge n)$ is in~$\ell
^2(\N_0)$
if and only if $(1-\mu^2)^{n/2} \frac{\partial^n}{\partial\mu^n}
k_I(\mu)$,
$\mu\in(-1,1)$, is in~$L^2(-1,1)$; that is,
\[
\frac{1}{(4\pi)^2} \sum_{\ell\geq n} A_\ell^2
\frac{2\ell+1}{2} \ell^{2n} < +\infty
\]
if and only if
\[
\int_{-1}^1 \biggl\llvert
\frac{\partial^n}{\partial\mu^n} k_I(\mu) \biggr\rrvert^2 \bigl(1-
\mu^2\bigr)^n \,d\mu< +\infty.
\]
\end{theorem}

\begin{pf}
We divide the proof into two steps.
Let us assume first that the theorem is already proven for $\eta\in\N
_0$, that is, that $V^n(-1,1)$ is isomorphic to the weighted sequence
$\ell_n$ for all $n \in\N_0$. So let $n < \eta< n+1$ for some $n
\in\N_0$ be given and set $\theta:= \eta-n$.
Applying the interpolation theorem of Stein--Weiss [see, e.g.,
Theorem~5.4.1 in~\citet{BL76}], we get that the weights of $\ell
_\eta$ are given by
\begin{eqnarray*}
&& \biggl(\frac{2\ell+1}{2} \bigl(1+\ell^{2n}\bigr)
\biggr)^{1-\theta} \biggl(\frac{2\ell+1}{2} \bigl(1+\ell^{2(n+1)}\bigr)
\biggr)^\theta
\\
&& \qquad= \frac{2\ell+1}{2} \bigl(1+\ell^{2n}\bigr)^{1-\theta}
\bigl(1+\ell^{2(n+1)}\bigr)^\theta.
\end{eqnarray*}
It remains to show that this is equivalent to $\frac{2\ell+1}{2}
(1+\ell^{2\eta})$. But this follows immediately with the observation
that the function $x^p$, $p \in(0,1)$, is concave on $\R_+$ and
satisfies $(x+y)^p \ge2^{p-1} (x^p + y^p)$.

In the second step, let us prove the isomorphism of $V^n(-1,1)$ and
$\ell_n$ for $n \in\N_0$, which is the same as proving the second
formulation of the theorem.
Therefore let us first observe that by definition
\begin{eqnarray*}
&& \int_{-1}^1 \biggl\llvert\frac{\partial^n}{\partial\mu^n}
k_I(\mu) \biggr\rrvert^2 \bigl(1-\mu^2
\bigr)^n \,d\mu
\\
&& \qquad= \int_{-1}^1 \Biggl( \sum
_{\ell=0}^\infty A_\ell\frac{2\ell+1}{4\pi}
\frac{\partial^n}{\partial\mu^n} P_\ell(\mu) \Biggr)^2 \bigl(1-
\mu^2\bigr)^n \,d\mu
\\
&& \qquad= \sum_{\ell,\ell'=0}^\infty
A_\ell\frac{2\ell+1}{4\pi} A_{\ell'} \frac{2\ell'+1}{4\pi} \int
_{-1}^1 \biggl(\frac{\partial^n}{\partial\mu^n}
P_\ell(\mu) \biggr) \biggl(\frac{\partial^n}{\partial\mu^n} P_{\ell
'}(\mu)
\biggr) \bigl(1-\mu^2\bigr)^n \,d\mu.
\end{eqnarray*}
By $(P_\ell^{(\ga,\gb)}, \ell\in\N_0)$, we denote
the Jacobi polynomials given, for example, by Rodrigues's formula
\[
P_\ell^{(\ga,\gb)}(\mu):= \frac{(-1)^\ell}{2^\ell\ell!} (1-
\mu)^{-\ga} (1+\mu)^{-\gb} \frac{\partial^\ell}{\partial\mu^\ell} \bigl((1-
\mu)^{\ga} (1+\mu)^{\gb} \bigl(1-\mu^2
\bigr)^\ell\bigr)
\]
for $\ell\in\N_0$, $\ga,\gb> -1$ and $\mu\in[-1,1]$.
They satisfy that
\[
\frac{\partial}{\partial\mu} P_\ell^{(\ga,\gb)}(\mu) = \frac{1}{2} (
\ell+\ga+\gb+1) P_{(\ell-1)}^{(\ga+1,\gb+1)}(\mu).
\]
Since Legendre polynomials are particular instances of
Jacobi polynomials for $\ga= \gb=0$,
we conclude by recursion that
\[
\frac{\partial^n}{\partial\mu^n} P_\ell(\mu) = \frac{\partial
^n}{\partial\mu^n}
P_\ell^{(0,0)}(\mu) = \frac{(\ell+n)!}{2^n \ell!} P_{(\ell-n)}^{(n,n)}(
\mu)
\]
for every $n \le\ell$.
This implies that
\begin{eqnarray*}
&& \int_{-1}^1 \biggl(\frac{\partial^n}{\partial\mu^n}
P_\ell(\mu) \biggr) \biggl(\frac{\partial^n}{\partial\mu^n} P_{\ell
'}(\mu)
\biggr) \bigl(1-\mu^2\bigr)^n \,d\mu
\\
&& \qquad= \int_{-1}^1 \frac{(\ell+n)!}{2^n \ell!}
P_{(\ell
-n)}^{(n,n)}(\mu) \frac{(\ell'+n)!}{2^n \ell'!} P_{(\ell'-n)}^{(n,n)}(
\mu) (1-\mu)^n (1+\mu)^n \,d\mu
\\
&& \qquad= \gd_{\ell\ell'} \frac{2}{2\ell+1} \frac{(\ell
+n)!}{(\ell-n)!},
\end{eqnarray*}
where the last equation follows from the orthogonality of the
Jacobi polynomials [see, e.g.,~\citet{Szego}] and
\[
\int_{-1}^1 \bigl(P_{(\ell-n)}^{(n,n)}(
\mu) \bigr)^2 (1-\mu)^n (1+\mu)^n \,d\mu=
\frac{2^{2n+1}}{2\ell+1} \frac{\ell! \ell!}{(\ell-n)!
(\ell+n)!}.
\]
In conclusion we have shown that
\[
\int_{-1}^1 \biggl\llvert
\frac{\partial^n}{\partial\mu^n} k_I(\mu) \biggr\rrvert^2 \bigl(1-
\mu^2\bigr)^n \,d\mu= \sum_{\ell= n}^\infty
A_\ell^2 \frac{2\ell+1}{2 (4\pi)^2} \frac
{(\ell+n)!}{(\ell-n)!},
\]
since for $n>\ell$, the $n$th derivative of $P_\ell$ vanishes.
To finish the proof it remains to show that
for $n \le\ell$ there exist constants $c_1(n)$ and $c_2(n)$
such that
\[
c_1(n) \ell^{2n} \le\frac{(\ell+n)!}{(\ell-n)!} \le
c_2(n) \ell^{2n}.
\]
This follows from Stirling's inequalities 
%
\[
\sqrt{2\pi} \ell^{\ell+1/2} e^{-\ell} \le\ell! \le e \cdot
\ell^{\ell+1/2} e^{-\ell}
\]
for $\ell\in\N$
by writing
\[
\frac{(\ell+ n)^{\ell+n}}{(\ell-n)^{\ell-n}} = \ell^{\ell+n - (\ell-n)}
\frac{(1+n/\ell)^{\ell(1+n/\ell
)}}{(1-n/\ell)^{\ell(1-n/\ell)}}
\]
and by using the properties of the exponential function.
\end{pf}

So in conclusion we have shown that a necessary
and sufficient criterion for the weighted $2$-summability of
the angular power spectrum $(A_\ell, \ell\in\N_0)$
is the weighted square integrability
of the $n$th weak derivatives of $k_I$ with respect to the weight function
$(1-\mu^2)^n$. This is extended to nonintegers by the introduction of
weighted Sobolev spaces and the use of interpolation theory.
For more details on the interpolation results, we refer 
to the \hyperref[appinterpolspaces]{Appendix}.

So far we obtained results for GRFs on~$\IS^2$.
These can be extended to GRFs on~$\IS^{d-1}$, $d \ge2$, which we
briefly outline next.
We start with the definition of covariance kernels.
For a centered, mean square continuous, isotropic random field on~$\IS
^{d-1}$ with prescribed
angular power spectrum $(A_\ell, \ell\in\N_0)$, it is shown in
Section~I.5.1 of~\citet{Y83} that the kernel~$k_T$ of the
covariance is given by
\begin{eqnarray*}
k_T(x,y)&:=& \E\bigl(T(x)T(y)\bigr)
\\
& = &\sum
_{\ell=0}^\infty A_\ell\sum
_{m=1}^{h(\ell,d)} S_{\ell
m}(x)
\overline{S_{\ell m}}(y)
\\
& = &\frac{1}{\omega_d}\sum_{\ell=0}^\infty
A_\ell\frac{C_\ell
^{(d-2)/2}(\langle x,y\rangle_{\R^d})}{C_\ell^{(d-2)/2}(1)} h(\ell,d)
\end{eqnarray*}
for $x,y \in\IS^{d-1}$, where $\omega_d = 2\pi^{1+(d-2)/2}/\Gamma
(1+(d-2)/2)$ is the total area of~$\IS^{d-1}$, and $C_\ell^\eta$
denotes the Gegenbauer polynomial
\[
C_\ell^\eta(x):=\frac{\Gamma(\eta+1/2)\Gamma(d+2\eta)}{\Gamma
(2\eta)\Gamma(\ell+\eta+1/2)}P_\ell^{(\eta-1/2,\eta-1/2)}(x),
\]
which can be characterized in terms of Jacobi polynomials. Similarly
to~$\IS^2$, the representation of the kernel of the covariance extends
to the definitions of the other representations by
\[
k(r):= \frac{1}{\omega_d}\sum_{\ell=0}^\infty
A_\ell\frac{C_\ell
^{(d-2)/2}(\cos r)}{C_\ell^{(d-2)/2}(1)} h(\ell,d)
\]
for $r \in[0,\pi]$ for the kernel $k\dvtx[0,\pi] \rightarrow\R$ as a
function of the distance $r = d(x,y)$ and by
\[
k_I(\mu):= k(\arccos\mu)
\]
for all $\mu\in[-1,1]$ for the kernel $k_I\dvtx[-1,1] \rightarrow\R$ as
a function of the inner product $\langle x,y\rangle_{\R^d}$.
This implies for $x,y \in\IS^{d-1}$ that
\[
k_T(x,y) = k\bigl(d(x,y)\bigr) = k_I\bigl(\langle
x,y \rangle_{\R^d}\bigr).
\]
So overall, we have to extend our results from Legendre polynomials to
Gegenbauer polynomials, which leads to more generally weighted
$L^2(-1,1)$ and Sobolev spaces.

Therefore let us first observe that Stirling's inequalities imply that
for fixed~$d$
\[
\frac{h(\ell,d)}{C_\ell^{(d-2)/2}(1)} \simeq\ell,
\]
since
$C_\ell^{\eta}(1) = {\ell+ 2\eta-1\choose\ell}$
and
$h(\ell,d) = (2\ell+ d-2)\cdot(\ell+d-3)!/((d-2)!\ell!)$.
Furthermore we observe [cp.~\citet{BE53}, Section~10.9]
that
\[
\frac{\partial^n}{\partial\mu^n} C_\ell^\eta(\mu) = 2^n
\frac{\Gamma(\eta+n)}{\Gamma(\eta)} C_{\ell-n}^{\eta
+n}(\mu) \simeq
C_{\ell-n}^{\eta+n}(\mu)
\]
and that the Gegenbauer polynomials are orthogonal with respect to the
weighted $L^2(-1,1)$ norm given by
\[
\int_{-1}^1 C_\ell^\eta(
\mu) C_{\ell'}^\eta(\mu) \bigl(1-\mu^2
\bigr)^{\eta-1/2} \,d\mu= \delta_{\ell\ell'} \frac{\pi2^{1-2\eta}\Gamma
(\ell+ 2\eta
)}{\ell! (\ell+ \eta) \Gamma(\eta)^2} \simeq
\delta_{\ell\ell'} \ell^{2\eta-2},
\]
where the last step follows using again Stirling's inequalities and
assuming that $2\eta\in\N$.

Then the combination of these results leads to
\begin{eqnarray*}
\int_{-1}^1 \biggl( \frac{\partial^n}{\partial\mu^n}
k_I(\mu) \biggr)^2 \bigl(1-\mu^2
\bigr)^{(d-3)/2 + n} \,d\mu&\simeq&\sum_{\ell=0}^\infty
A_\ell^2 \ell^2 \ell^{d-4+2n}
\\
&=& \sum
_{\ell=0}^\infty A_\ell^2
\ell^{d-2+2n}.
\end{eqnarray*}
Defining the weighted Sobolev spaces $V^n(-1,1)$ for $n \in\N_0$ by
the completion of the standard Sobolev spaces with respect to the
weighted norms
\[
\int_{-1}^1 \biggl( \frac{\partial^n}{\partial\mu^n} u(\mu)
\biggr)^2 \bigl(1-\mu^2\bigr)^{(d-3)/2 + n} \,d\mu
\]
and using interpolation theory to define $V^\eta(-1,1)$ for positive
$\eta\notin\N_0$, we obtain the following generalization of
Theorem~\ref{thmBesovEquivNorm}.

\begin{theorem}
Let $u \in L^2(-1,1)$ and $\eta\in\R_+$ be given. Then $u \in V^\eta
(-1,1)$ if and only if
\[
\sum_{\ell=0}^\infty u_\ell^2
\ell^{d-2 +2\eta} < + \infty;
\]
that is,
\[
\llVert u\rrVert_{V^\eta(-1,1)}^2 \simeq\sum
_{\ell=0}^\infty u_\ell^2
\ell^{d-2 +2\eta}
\]
is an equivalent norm in~$V^\eta(-1,1)$.

For $k_I \in V^n(-1,1)$, $n \in\N_0$, this simplifies to the equivalence
that the sequence $(\ell^{d/2-1 + n} A_\ell, \ell\ge n)$ is in~$\ell
^2(\N_0)$
if and\vspace*{1pt} only if the function $(1-\mu^2)^{(d-3)/4+n/2} \frac{\partial
^n}{\partial\mu^n} k_I(\mu)$,
$\mu\in(-1,1)$, is in~$L^2(-1,1)$.
\end{theorem}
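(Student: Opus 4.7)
The plan is to follow the two-step strategy used in the proof of Theorem~\ref{thm:BesovEquivNorm}: first establish the claimed isomorphism for integer $\eta = n \in \N_0$, then lift the result to $\eta \in \R_+ \setminus \N_0$ by real interpolation. The interpolation step is formally identical to the argument given for $\IS^2$: the Stein--Weiss theorem applied to the two weighted sequence spaces $\ell^2((1 + \ell^{2n}, \ell \in \N_0))$ and $\ell^2((1 + \ell^{2(n+1)}, \ell \in \N_0))$, after factoring out the common multiplier $\ell^{d-2}$, combined with the elementary equivalence $(1 + \ell^{2n})^{1-\theta}(1 + \ell^{2(n+1)})^\theta \simeq 1 + \ell^{2\eta}$ for $\theta = \eta - n$, produces the desired geometric interpolant. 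I would therefore concentrate on the integer case.

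For the integer case, the key computation has essentially been assembled in the excerpt immediately before the theorem statement. Starting from the Gegenbauer expansion of $k_I$, repeated differentiation via
\begin{equation*}
\frac{\partial^n}{\partial\mu^n} C_\ell^{(d-2)/2}(\mu)
 = 2^n \frac{\gG((d-2)/2 + n)}{\gG((d-2)/2)} \, C_{\ell-n}^{(d-2)/2 + n}(\mu)
\end{equation*}
raises the Gegenbauer parameter by exactly the amount needed so that the natural orthogonality weight of the resulting polynomials, namely $(1-\mu^2)^{(d-2)/2 + n - 1/2} = (1-\mu^2)^{(d-3)/2 + n}$, coincides with the weight used in the definition of $V^n(-1,1)$. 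Inserting the orthogonality relation
\begin{equation*}
\int_{-1}^1 C_{\ell-n}^{(d-2)/2 + n}(\mu) \, C_{\ell'-n}^{(d-2)/2 + n}(\mu) \, (1-\mu^2)^{(d-3)/2 + n} \, d\mu
\simeq \delta_{\ell\ell'} \, \ell^{d - 4 + 2n}
\end{equation*}
together with the asymptotic $h(\ell,d)/C_\ell^{(d-2)/2}(1) \simeq \ell$ collapses $|k_I|_{V^n(-1,1)}^2$ into $\sum_{\ell \ge n} A_\ell^2 \, \ell^{d - 2 + 2n}$, up to multiplicative constants depending only on $d$ and $n$. Identifying the generic coefficients $u_\ell$ with $A_\ell$ up to such constants, and adding the contribution of the $L^2(-1,1)$-part of the norm (which provides the summand $u_\ell^2 \ell^{d-2}$ absorbed in the factor $1 + \ell^{2n}$), delivers the stated equivalence for integer $\eta$.

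The main technical obstacle is the bookkeeping of the Gamma-function ratios that appear in both the Gegenbauer derivative rule and the orthogonality asymptotic: one must verify that the product of these ratios is uniformly comparable in $\ell$ to the single power $\ell^{d-2+2n}$. This is handled exactly as at the end of the proof of Theorem~\ref{thm:BesovEquivNorm}, by applying Stirling's inequalities to ratios of the form $\gG(\ell + a)/\gG(\ell + b)$. A minor caveat is that the orthogonality asymptotic is stated under the hypothesis $2\eta \in \N$; since in our geometric setting $2 \cdot (d-2)/2 = d-2$ is automatically a nonnegative integer, this causes no restriction. The second formulation of the theorem, concerning $k_I$ itself, then follows by tracing the identification $u_\ell \leftrightarrow A_\ell$ back through the definition of $k_I$.
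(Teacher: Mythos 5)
Your proposal follows essentially the same route as the paper: the paper itself states this theorem as the direct consequence of the Gegenbauer computations immediately preceding it (the derivative rule raising the parameter by $n$, the orthogonality with weight $(1-\mu^2)^{(d-3)/2+n}$, the asymptotic $h(\ell,d)/C_\ell^{(d-2)/2}(1)\simeq \ell$ via Stirling) combined with the interpolation argument already carried out in the proof of Theorem~\ref{thm:BesovEquivNorm}, and your two-step plan reproduces exactly this, including the correct observation that the hypothesis $2\eta\in\N$ in the orthogonality asymptotic is automatically met since the relevant parameter is $(d-2)/2+n$. No gap.
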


\section{Sample H\"older continuity and differentiability}\label{secHoeldercont}
Our analysis of GRFs via the Kar\-hunen--Lo\`eve expansion in
Section~\ref{seciGRF}
focused so far on mean square properties.
In this section we consider sample properties of isotropic GRFs
introduced in Section~\ref{seciGRF}.
Specifically,
we are interested how the $\IP$-almost sure H\"older continuity
of isotropic GRFs depends on the decay of the angular power
spectrum $(A_\ell, \ell\in\N_0)$ which is one possible
characterization of isotropic GRFs on~$\IS^2$ by Theorem~\ref
{thmKLexpansion} and Lemma~\ref{lempropalm}.
In the sequel we will frequently make use of a summability condition on
the angular power spectrum,
which we state in the following assumption.
%
\begin{assumption}[(Summability condition on the angular power
spectrum)]\label{asssummability-Al}
Assume that the angular power spectrum $(A_\ell, \ell\in\N_0)$ of
an iso\-tro\-pic Gaussian random field on~$\IS^{d-1}$ satisfies
for some $\gb>0$ that
\[
\sum_{\ell= 0}^\infty A_\ell
\ell^{d-2+\gb} < + \infty.
\]
%
\end{assumption}
The following lemma relates the decay of the
angular power spectrum to the H\"older continuity of the kernel~$k$
at zero, that is, to the H\"older continuity in mean square of the
corresponding random field.
The field is known to be mean square continuous by~\citet{MP13}, but
to derive exponents of sample H\"older continuity of (modifications of)
the random field, we need stronger results.

\begin{lemma}\label{lemHoeldercontkernelk}
Let $(A_\ell, \ell\in\N_0)$
be the angular power spectrum of an isotropic GRF on~$\IS^2$
which satisfies Assumption~\ref{asssummability-Al} with $d=3$
for some $\gb\in[0,2]$.
%
%
Then the corresponding kernel function $k$ satisfies that
there exists a constant $C_\gb$ such that for all $r \in[0,\pi]$
\[
\bigl\llvert k(0) - k(r)\bigr\rrvert\le C_\gb
r^\gb.
\]
\end{lemma}

\begin{pf}
We observe that $P_\ell(1) = 1$ for all $\ell\in\N_0$
and that the derivative of $P_\ell(x)$ is bounded by $P_\ell'(1)$ for
all $x \in[-1,1]$.
Therefore
\[
\bigl\llvert1-P_\ell(x)\bigr\rrvert= \biggl\llvert\int
_x^1 P_\ell'(y) \,dy
\biggr\rrvert\le\llvert1-x\rrvert\frac{\ell(\ell+1)}{2}.
\]
Furthermore we have that
\[
\bigl\llvert1-P_\ell(x)\bigr\rrvert\le2.
\]
This implies by interpolation that
\[
\bigl\llvert1-P_\ell(x)\bigr\rrvert\le\biggl(\llvert1-x\rrvert
\frac{\ell(\ell+1)}{2} \biggr)^\gg2^{1-\gg} \le2 \llvert1-x
\rrvert^\gg\bigl(\ell(\ell+1)\bigr)^\gg
\]
for all $\gg\in[0,1]$.
Using this estimate we obtain that
\begin{eqnarray*}
\bigl\llvert k(0) - k(r)\bigr\rrvert& \le&\sum_{\ell=0}^\infty
A_\ell\frac{2\ell+1}{4\pi} \bigl\llvert1-P_\ell(\cos r)\bigr
\rrvert
\\
& \le&(2\pi)^{-1} \llvert1-\cos r\rrvert^\gg\sum
_{\ell=0}^\infty A_\ell(2\ell+1)
\bigl(\ell(\ell+1)\bigr)^\gg,
\end{eqnarray*}
where the series converges if $\sum_{\ell= 0}^\infty A_\ell\ell
^{2\gg+ 1}$,
which holds by the made assumptions for all $\gg\le\gb/2$.
Finally we observe that
\[
\llvert1-\cos r\rrvert= \biggl\llvert\int_0^r
\sin x \,dx \biggr\rrvert\le r \sin r = r \int_0^r
\cos x \,dx \le r^2 \cdot1,
\]
which implies overall with the choice $\gb= 2\gg$ that
\[
\bigl\llvert k(0) - k(r)\bigr\rrvert\le C_\gb r^\gb,
\]
where
\[
C_\gb:= (2\pi)^{-1} \sum_{\ell=0}^\infty
A_\ell(2\ell+1) \bigl(\ell(\ell+1)\bigr)^{\gb/2}.
\]
This finishes the proof of the lemma.
\end{pf}
Lemma~\ref{lemHoeldercontkernelk} asserts
H\"older continuity of $k(r)$ at $r=0$
in terms of a $\ell^1$ criterion
on the angular power spectrum of
the isotropic GRF~$T$, while we provided $\ell^2$ criteria in
Section~\ref{secPowSpecDec}.
To relate these criteria, we first observe that for $\varepsilon> 0$
\[
\sum_{\ell=0}^\infty A_\ell
\ell^{1+\gb} \le\zeta(1+\varepsilon)^{1/2} \Biggl(\sum
_{\ell=0}^\infty A_\ell^2
\ell^{3+2\gb+\varepsilon} \Biggr)^{1/2}
\]
by the Cauchy--Schwarz inequality, where $\zeta$ denotes the
Riemann zeta function.
This implies with Theorem~\ref{thmBesovEquivNorm} that
Assumption~\ref{asssummability-Al} with $d=3$ is satisfied
if the kernel~$k_I$ is in $V^\eta(-1,1)$ for some $\eta> \gb+ 1$.

Our next step is to give bounds on moments of
$T(x) - T(y)$ for $x,y \in\IS^2$
in terms of the geodesic distance $d(x,y)$.
We prove the lemma by expressing the moments in
terms of the kernel~$k$
and by an application of the preceding lemma.

\begin{lemma}\label{lemboundmomentsTx-Ty}
Let $T$ be an isotropic Gaussian random field on~$\IS^2$
with angular power spectrum $(A_\ell, \ell\in\N_0)$.
If the angular power spectrum satisfies
Assumption~\ref{asssummability-Al} with $d=3$
for some $\gb\in[0,2]$,
then for all $p \in\N$ there exists a constant~$C_{\gb,p}$ such that
for all $x,y \in\IS^2$,
\[
\E\bigl(\bigl\llvert T(x) - T(y)\bigr\rrvert^{2p}\bigr) \le
C_{\gb,p} \,d(x,y)^{\gb p}.
\]
\end{lemma}

\begin{pf}
First note that $T(x) - T(y)$ is a centered Gaussian random variable.
Furthermore, if $X$ is a $\cN(0,\gs^2)$ distributed random variable,
then
\[
\E\bigl(\llvert X\rrvert^{2p}\bigr) = \E\bigl(\llvert\gs Y\rrvert
^{2p}\bigr) = \bigl(\gs^2\bigr)^p \E\bigl(
\llvert Y\rrvert^{2p}\bigr) = \E\bigl(X^2
\bigr)^p c_{2p}
\]
for $p \in\N$, where $Y$ is a standard normally distributed random
variable and $c_{2p}$ denotes the $2p$th moment of~$Y$.
We also observe that $\E(\llvert T(x) - T(y)\rrvert ^2)$ can be
expressed in terms
of~$k$ since
\begin{eqnarray*}
\E\bigl(\bigl\llvert T(x) - T(y)\bigr\rrvert^2\bigr) & = &\E
\bigl(T(x)^2\bigr) - 2 \E\bigl(T(x)T(y)\bigr) + \E
\bigl(T(y)^2\bigr)
\\
& = &k_T(x,x) - 2 k_T(x,y) + k_T(y,y)
\\
& = &2 \bigl( k(0) - k\bigl(d(x,y)\bigr) \bigr).
\end{eqnarray*}
Combining the two previous observations, we conclude that
\begin{eqnarray*}
\E\bigl(\bigl\llvert T(x) - T(y)\bigr\rrvert^{2p}\bigr) & = &c_{2p} \E\bigl(\bigl\llvert T(x) - T(y)\bigr\rrvert^2
\bigr)^p
\\
& = &2 c_{2p} \bigl(k(0) - k\bigl(d(x,y)\bigr)\bigr)^p
\\
& \le&2 c_{2p} C_\gb^p d(x,y)^{\gb p},
\end{eqnarray*}
where we applied Lemma~\ref{lemHoeldercontkernelk} in the last step.
This completes the proof of the lemma.
\end{pf}

The following result is a version of the Kolmogorov--Chentsov theorem for
random fields with domain~$\IS^2$, which is a special version of
Theorem~3.5 in~\citet{AL14} and proven here independently for
completeness. Note that in this result the fields do not have to be
Gaussian or isotropic.

\begin{theorem}[(Kolmogorov--Chentsov theorem)]\label{thmKC-sphere}
Let $T$ be a random field on the sphere that satisfies for some $p>0$
and some $\varepsilon\in(0,1]$
that there exists a constant~$C$ such that
\[
\E\bigl(\bigl\llvert T(x) - T(y)\bigr\rrvert^p\bigr) \le C
\,d(x,y)^{2 + \varepsilon p}
\]
for all $x,y \in\IS^2$. Then there exists a continuous modification of~$T$
that is locally H\"older continuous with exponent~$\gg$ for all $\gg
\in(0,\varepsilon)$.
\end{theorem}

\begin{pf}
Let us first construct six charts $(U_i, i=1,\ldots, 6)$ that cover
the sphere by taking the six possible hemispheres given by the
coordinate system such that the boundary is a circle of radius~$r$ with
$r \in(\sqrt{2/3},1)$; that is, we take a bit less than the complete
hemispheres but enough to cover the whole sphere. Let the coordinate
maps $(\varphi_i, i=1,\ldots,6)$ be given by the projection onto the
plane that divides the hemispheres; that is, if $U$ is contained in the
northern hemisphere, then the corresponding coordinate map~$\varphi$
is given by $\varphi((x_1,x_2,x_3)):= (x_1,x_2)$ for $x=
(x_1,x_2,x_3) \in U$ and maps onto the disc $\{x \in\R^2, \llVert
x\rrVert _{\R
^2} < r\}$.

For a given chart $(U,\varphi)$, we have to show that the Euclidean
norm in~$\R^2$ is equivalent to the metric on~$\IS^2$, that is, that
there exist constants $C_1, C_2 >0$ such that for all $x,y \in U$
\[
C_1 \bigl\llVert\varphi(x) - \varphi(y)\bigr\rrVert
_{\R^2} \le d(x,y) \le C_2 \bigl\llVert\varphi(x) -
\varphi(y)\bigr\rrVert_{\R^2}
\]
or equivalently that
\[
C_1 \le\frac{\arccos(\langle x,y\rangle_{\R^3})}{\llVert \varphi(x) -
\varphi(y)\rrVert _{\R^2}} \le C_2.
\]
We show this estimate for $U$ contained in the northern hemisphere. The
calculations for the other five charts are similar, and the bounds are
the same due to symmetry.

One first calculates that
\[
\langle x,y\rangle_{\R^3} = 1 - \tfrac{1}{2}\bigl(\bigl\llVert
\varphi(x) - \varphi(y)\bigr\rrVert_{\R^2}^2 + \llvert
x_3-y_3\rrvert^2\bigr)
\]
and shows that
\[
0 \le\llvert x_3-y_3\rrvert^2 \le
\frac{2r^2}{1-r^2} \bigl\llVert\varphi(x) - \varphi(y)\bigr\rrVert
_{\R^2}^2.
\]
This implies that we can bound the quotient of interest from above and
below by
\begin{eqnarray*}
&& \frac{\arccos(1 - ({1}/{2})\llVert \varphi(x) - \varphi(y)\rrVert
_{\R
^2}^2)}{\llVert \varphi(x) - \varphi(y)\rrVert _{\R^2}}
\\
&&\qquad  \le \frac
{\arccos(\langle x,y\rangle_{\R^3})}{\llVert \varphi(x) -
\varphi(y)\rrVert _{\R^2}}
\\
&&\qquad  \le\frac{\arccos(1 - ({1}/{2} + {r^2}/{(1-r^2)})\llVert \varphi
(x) - \varphi(y)\rrVert _{\R^2}^2)}{\llVert \varphi(x) - \varphi
(y)\rrVert _{\R^2}},
\end{eqnarray*}
since $\arccos$ is a monotonically decreasing function.
Let us define the function $f\dvtx[0,2r) \rightarrow\R$ by
\[
f(a):= \frac{\arccos(1 - \ga a^2)}{a}
\]
for $a \in(0,2r)$, where $\ga= 1/2, 1/2 + r^2/(1-r^2)$.
Then one shows with standard methods from real analysis that $f$ is
well defined on $[0,2r)$ and monotonically increasing,
which leads with the observation that $f(0) = \sqrt{2\ga}$ by l'H\^opital's
rule to the conclusion that
\[
C_1:= 1 \le\frac{\arccos(\langle x,y\rangle_{\R^3})}{\llVert \varphi
(x) -
\varphi(y)\rrVert _{\R^2}} \le\frac{\arccos((2r^4+3r^2-1)/(r^2-1))}{2r} =:
C_2 < + \infty
\]
and completes the proof of the equivalence of geodetic and Euclidean
distances on the sphere and in the charts.

For $a,b \in\varphi(U)$ it holds for the random field on the chart
by our assumptions and the equivalence of the distances that
\begin{eqnarray*}
\E\bigl(\bigl\llvert T\bigl(\varphi^{-1}(a)\bigr) - T\bigl(
\varphi^{-1}(b)\bigr)\bigr\rrvert^p\bigr) & \le& C \cdot d
\bigl(\varphi^{-1}(a),\varphi^{-1}(b)\bigr)^{2/p + \varepsilon}
\\
& \le& C \cdot C_2^{2/p + \varepsilon} \llVert a-b\rrVert^{2/p +
\varepsilon}.
\end{eqnarray*}
Since $\varphi(U)$ is a domain in~$\R^2$, we obtain by the
Kolmogorov--Chentsov theorem for domains [see Theorem~2.1
in~\citet{MS03}, Theorem~4.5 in~\citet{P092} or Theorem~3.1
in~\citet{AL14}] that there exists a continuous
modification~$T_1\circ\varphi^{-1}$ that is locally H\"older
continuous with exponent~$\gg$ for all $\gg\in(0,\varepsilon)$ and so
is $T_1$ on~$U$ due to the smoothness of the coordinate map.

With the same proof we obtain continuous modifications
$(T_i, i=1,\ldots,6)$ on all charts $(U_i, i=1,\ldots,6)$.
We glue these together with a smooth partition of unity $(\rho_i,
i=1,\ldots,6)$
on~$\IS^2$, which is subordinate to the open
covering [see, e.g., Theorem~1.73 in~\citet{L09}]
by
\[
\tilde{T}(x):= \sum_{i=1}^6
\rho_i(x) T_i(x)
\]
for all $x \in\IS^2$, where $T_i(x) = 0$ for $x \notin U_i$.
Then $\tilde{T}$ is a continuous modification of~$T$ that is
locally H\"older continuous with the same exponent~$\gg$ for all
$\gg\in(0,\varepsilon)$ due to the smoothness of the partition of unity.
This completes the proof of the theorem.
\end{pf}

With the made observations, we are now prepared to prove one of
the main results of this section which states that if the angular power spectrum
of an isotropic Gaussian random field is summable with weights $\ell
^{1+\gb}$,
then there exists a continuous modification which is H\"older continuous
with exponent~$\gg$ for all $\gg<\gb/2$.

\begin{theorem}\label{thmHoeldercontiGRF}
Let $T$ be an isotropic Gaussian random field on~$\IS^2$
with angular power spectrum $(A_\ell, \ell\in\N_0)$.
If the angular power spectrum satisfies
Assumption~\ref{asssummability-Al} with $d=3$
for some $\gb\in(0,2]$,
then there exists a continuous modification of~$T$
that is H\"older continuous with exponent~$\gg$
for all $\gg< \gb/2$.
\end{theorem}

\begin{pf}
The claim follows by the application of the previous results in the
following way:
it holds by Lemma~\ref{lemboundmomentsTx-Ty} that
for all $p \in\N$ and $x,y \in\IS^2$ the random field satisfies
\[
\E\bigl(\bigl\llvert T(x) - T(y)\bigr\rrvert^{2p}\bigr) \le
C_{\gb,p} \,d(x,y)^{\gb p} = C_{\gb,p} \,d(x,y)^{2 + (\gb/2 - 1/p)2p}.
\]
Theorem~\ref{thmKC-sphere} finally implies that there exists a
continuous modification that is locally H\"older continuous with
exponent~$\gg$ for all $\gg<\gb/2 - 1/p$ for any $p \in\N$, that
is, with exponent~$\gg$ for all $\gg< \gb/2$.
\end{pf}
Just as an example, let us calculate the parameters of
$\IP$-almost sure H\"older continuity for the two choices of~$\ga$
that we simulate in the following sections. Therefore let the angular
power spectrum of~$T$ be given by $A_\ell:= (\ell+1)^{-\ga}$ for
$\ell\in\N_0$. For $\ga= 3$ we get $\gb< 1$ which implies
$\gg<1/2$ in Theorem~\ref{thmHoeldercontiGRF} and $\ga=5$ implies
$\gb= 2$ and therefore $\gg<1$.

Furthermore as second main result of this section we are interested in the
assumptions on the angular power spectrum that imply the existence of
differentiable modifications
of isotropic GRFs.
In particular in the context of approximate, numerical solutions
of partial differential equations,
regularity properties of samples are essential for
the derivation of convergence rates for, for example, finite element or
finite difference
discretizations.

\begin{theorem}
\label{thmPathRegIsoGRF}
Let $T$ be a centered, isotropic Gaussian random field on~$\IS^2$ with
angular power spectrum
$(A_\ell, \ell\in\N_0)$.
If the angular power spectrum satisfies
Assumption~\ref{asssummability-Al} with $d=3$
%
for some $\beta> 0$,
then there exists a $C^\gg(\IS^2)$-valued modification of~$T$
for all $\gg< \gb/2$; that is, the modification is
$k$-times continuously differentiable with $k=\lceil\gb/2 \rceil-1$, and
the $k$th derivatives are H\"older continuous with exponent $\gg-k$.
\end{theorem}
\begin{pf}
Let us first observe that the made assumptions imply that
$T$ has a continuous modification by Theorem~\ref{thmHoeldercontiGRF}.
Without loss of generality let $T$ already be the continuous modification,
which is an isotropic Gaussian random field with the same parameters
and has
a Kar\-hunen--Lo\`eve expansion with the same parameters by
Corollary~\ref{corKLexpiGRF}.
Let $k:= \lceil\gb/2 \rceil-1$. Then
\begin{eqnarray*}
&& \E\bigl( \bigl\llVert(1-\Delta_{\IS^2})^{k/2} T\bigr\rrVert
_{L^2(\IS^2)}^2\bigr)
\\
&&\qquad  = \E\Biggl( \Biggl\llVert\sum
_{\ell=0}^\infty\sum_{m=-\ell}^\ell
a_{\ell m} \bigl(1+\ell(\ell+1)\bigr)^{k/2} Y_{\ell m}
\Biggr\rrVert_{L^2(\IS
^2)}^2 \Biggr)
\\
&&\qquad = \E\Biggl( \sum_{\ell=0}^\infty\sum
_{m=-\ell}^\ell a_{\ell
m}^2
\bigl(1+\ell(\ell+1)\bigr)^{k} \Biggr)
\\
&&\qquad  = \sum_{\ell=0}^\infty\Biggl(
A_\ell+ 2 \sum_{m=1}^\ell
A_\ell/2 \Biggr) \bigl( 1 + \bigl(\ell(\ell+1) \bigr)
\bigr)^k
\\
&&\qquad  = \sum_{\ell=0}^\infty A_\ell(
\ell+1) \bigl( 1 + \bigl(\ell(\ell+1) \bigr) \bigr)^k
\\
&&\qquad  < + \infty
\end{eqnarray*}
by the made assumptions. Furthermore $(1-\Delta_{\IS^2})^{k/2} T$ is
a continuous Gaussian random field by the properties of the Karhunen--Lo\`eve expansion with angular power spectrum $(A_\ell(\ell+1)
( 1 + (\ell(\ell+1) ) )^k, \ell\in\N_0)$.
Applying Theorem~\ref{thmHoeldercontiGRF} with parameter $\gb- 2k$,
we obtain that $(1-\Delta_{\IS^2})^{k/2} T$ is H\"older continuous
with exponent $\gg'$ for all $\gg' < \gb/2-k$, since it was already
continuous. By Theorem~XI.2.5 in~\citet{T81}, we obtain that $T
\in C^{k + \gg'}(\IS^2)$, where $m=-k$ in the framework of that
theorem. Setting $\gg:= k + \gg'$, we conclude that $T$ is in $C^{\gg
}(\IS^2)$ for all $\gg< \gb/2$ by the definition of~$\gg'$, which
completes the proof of the theorem.
\end{pf}
The results of this section can be extended to Gaussian random fields
on~$\IS^{d-1}$, $d\ge2$, with the methods introduced here and results
from~\citet{Y83}. The regularity result for H\"older continuity
and differentiability as generalizations of Theorems~\ref
{thmHoeldercontiGRF} and~\ref{thmPathRegIsoGRF} is stated in what
follows before we sketch how to adapt the proofs of the previous
results. The obtained results improve Theorem~II.2.11 in~\citet
{Y83} for H\"older continuity, since our version of the theorem does
not require the logarithmic term in the summability assumption on the
angular power spectrum.

\begin{theorem}
Let $T$ be a centered, isotropic Gaussian random field on~$\IS^{d-1}$
with angular power spectrum
$(A_\ell, \ell\in\N_0)$.
If\vspace*{2pt} the angular power spectrum satisfies
Assumption~\ref{asssummability-Al}
for some $\beta> 0$,
%
%
then there exists a $C^\gg(\IS^{d-1})$-valued modification of~$T$ for
all $\gg< \gb/2$; that is,
the modification is $k$-times continuously differentiable for $k=\lceil
\gb/2 \rceil-1$,
and the $k$th derivatives are H\"older continuous with exponent $\gg-k$.
\end{theorem}

\begin{pf}
Let us first consider $\gb\in(0,2]$. We obtain as generalization of
Lemma~\ref{lemHoeldercontkernelk} with Theorem~II.2.9 in~\citet
{Y83} for $r \in[0,\pi]$ that
\[
\bigl\llvert k(0) - k(r)\bigr\rrvert\le C_\gb r^\gb,
\]
where $C_\gb$ denotes a constant that does not depend on~$r$, and we
set $\gg(r) = r^\gb$ in that theorem. Lemma~\ref
{lemboundmomentsTx-Ty} extends in a one-to-one fashion to~$\IS^{d-1}$,
so that we conclude that
\[
\E\bigl(\bigl\llvert T(x) - T(y)\bigr\rrvert^{2p}\bigr) \le
C_{\gb,p} \,d(x,y)^{\gb p}
\]
for all $x,y \in\IS^{d-1}$ and $p \in\N$, where $C_{\gb,p}$
depends on the indicated parameters.
Using conformal charts on~$\IS^{d-1}$, the Kolmogorov--Chentsov
theorem on manifolds proven in Theorem~3.5 in~\citet{AL14}
implies that there exists a continuous modification of~$T$ that is H\"
older continuous with exponent~$\gg<\gb/2$. This completes the proof
of the theorem for $\gb\in(0,2]$.

For $\gb> 2$ we have to extend Theorem~\ref{thmPathRegIsoGRF} to
arbitrary dimensions~$d$. This can be done equivalently since
Theorem~XI.2.5 in~\citet{T81} holds for all compact manifolds. We
first recall that the Laplace--Beltrami operator~$\Delta_{\IS^{d-1}}$
on~$\IS^{d-1}$ has the spherical harmonics $(S_{\ell m}, \ell\in\N
_0, m=1,\ldots,h(\ell,d))$ as eigenbasis with eigenvalues given by
\[
\Delta_{\IS^{d-1}} S_{\ell m} = - \ell(\ell+d-2)S_{\ell m}
\]
for $\ell\in\N_0$ and $m=1,\ldots,h(\ell,d)$; see, for example,
\citet{AH12}, Section~3.3. Let us assume that $T$ is already
continuous without loss of generality by the first part of the proof.
Then the main calculation in the proof of Theorem~\ref
{thmPathRegIsoGRF} reads in the general case for $k:= \lceil\gb/2
\rceil-1$
\begin{eqnarray*}
&& \E\bigl( \bigl\llVert(1-\Delta_{\IS^{d-1}})^{k/2} T\bigr\rrVert
_{L^2(\IS
^{d-1})}^2 \bigr)
\\
&& \qquad= \E\Biggl( \Biggl\llVert\sum_{\ell=0}^\infty
\sum_{m=1}^{h(\ell,d)} a_{\ell m} \bigl(1+
\ell(\ell+d-2)\bigr)^{k/2} S_{\ell
m}\Biggr\rrVert
_{L^2(\IS^{d-1})}^2 \Biggr)
\\
&& \qquad= \E\Biggl( \sum_{\ell=0}^\infty\sum
_{m=1}^{h(\ell,d)} a_{\ell m}^2
\bigl(1+\ell(\ell+d-2)\bigr)^{k} \Biggr)
\\
&& \qquad= \sum_{\ell=0}^\infty
A_\ell h(\ell,d) \bigl( 1 + \bigl(\ell(\ell+d-2) \bigr)
\bigr)^k
\\
&& \qquad< + \infty,
\end{eqnarray*}
since $h(\ell,d) \simeq\ell^{d-2}$ by Stirling's inequalities and
$2k < \gb$.
The first part of the proof for $\gb\in(0,2]$ implies for the
continuous Gaussian random field $(1-\Delta_{\IS^{d-1}})^{k/2} T$,
which has the corresponding angular power spectrum $(A_\ell h(\ell,\break d)
( 1 + (\ell(\ell+d-2) ) )^k, \ell\in\N_0)$,
that it is H\"older continuous with exponent~$\gg'$ for all $\gg'<
\gb/2-k$.
Theorem~XI.2.5 in~\citet{T81}
again yields that $T \in C^{k + \gg'}(\IS^{d-1})$, and the proof of
the second part
of the theorem is finished in the same way as for~$\IS^2$.
\end{pf}
We remark that an alternative argument for the
H\"older sample regularity on~$\IS^2$ which avoids resorting
to H\"older regularity theory for elliptic pseudodifferential operators on
manifolds is sketched in \citet{Herrm2013}.
There, for even exponents~$k$, H\"older regularity was inferred from
Schauder estimates for (integer) powers of the Laplace--Beltrami operator,
and the result for general H\"older exponents was obtained by interpolation.
%
\section{Approximation of isotropic Gaussian random fields}\label{secapproxiGRF}
Let us approximate and simulate isotropic Gaussian random fields
in this section, where we use the properties of the random fields
that were introduced in Section~\ref{seciGRF}.
In what follows, we consider centered random fields without loss
of generality. It is clear by Corollary~\ref{corKLexpiGRF} that
we can transform the centered, isotropic random field into
a field with nonzero expectation by adding the expectation,
which is a constant according to Lemma~\ref{lempropalm}.
To prepare the presentation of the approximation of isotropic GRFs on~$\IS^2$,
we rewrite its series expansions, where we use the properties
of the spherical harmonic functions and the structure of real-valued
random fields.

\begin{lemma}\label{lemsimulatesequenceexpT}
Let $T$ be a centered, isotropic Gaussian random field.
For $\ell\in\N$, $m = 1,\ldots,\ell$, and $\vartheta\in[0,\pi
]$, set
\[
L_{\ell m}(\vartheta):= \sqrt{\frac{2\ell+1}{4\pi} \frac{(\ell
-m)!}{(\ell+m)!}}
P_{\ell m}(\cos\vartheta).
\]
Then for $y= (\sin\vartheta\cos\varphi, \sin\vartheta\sin\varphi
, \cos\vartheta)$ there holds
\begin{eqnarray*}
T(y) &=& \sum_{\ell=0}^\infty\Biggl(
\sqrt{A_\ell} X^1_{\ell0} L_{\ell
0}(
\vartheta)
\\
&&\hspace*{19pt}{} + \sqrt{2A_\ell} \sum_{m=1}^\ell
L_{\ell m}(\vartheta) \bigl(X^1_{\ell
m} \cos(m
\varphi) + X^2_{\ell m} \sin(m \varphi)\bigr)\Biggr)
\end{eqnarray*}
in law, where $((X^1_{\ell m}, X^2_{\ell m}), \ell\in\N_0,
m=0,\ldots, \ell)$ is a sequence of independent, real-valued,
standard normally distributed random variables and $X^2_{\ell0} = 0$
for \mbox{$\ell\in\N_0$}.
\end{lemma}

\begin{pf}
By Corollary~\ref{corKLexpiGRF} $T$ can be represented in the (mean
square convergent)
Kar\-hunen--Lo\`eve expansion
\[
T = \sum_{\ell=0}^\infty\sum
_{m=-\ell}^\ell a_{\ell m} Y_{\ell m}.
\]
This sum can be rewritten to
\begin{eqnarray*}
T & = &\sum_{\ell=0}^\infty
\Biggl(a_{\ell0} Y_{\ell0} + \sum_{m=1}^\ell(a_{\ell m}
Y_{\ell m} + a_{\ell-m} Y_{\ell-m}) \Biggr)
\\
& = &\sum_{\ell=0}^\infty\Biggl(a_{\ell0}
L_{\ell0}(\vartheta) + \sum_{m=1}^\ell
\bigl(a_{\ell m} Y_{\ell m} + (-1)^m
\overline{a_{\ell
m}} (-1)^m \overline{Y_{\ell m}}\bigr)
\Biggr)
\\
& = &\sum_{\ell=0}^\infty\Biggl(a_{\ell0}
L_{\ell0}(\vartheta) + \sum_{m=1}^\ell(a_{\ell m}
Y_{\ell m} + \overline{a_{\ell m} Y_{\ell m}})\Biggr)
\\
& = &\sum_{\ell=0}^\infty\Biggl(a_{\ell0}
L_{\ell0}(\vartheta) + \sum_{m=1}^\ell2
\Re(a_{\ell m} Y_{\ell m}) \Biggr)
\end{eqnarray*}
by Lemma~\ref{lempropalm} and the properties of the spherical harmonic
functions.
We observe that
\[
Y_{\ell m}(\vartheta,\varphi) = L_{\ell m}(\vartheta)
e^{im\varphi} = L_{\ell m}(\vartheta) \bigl(\cos(m\varphi) + i \sin(m
\varphi)\bigr)
\]
for $(\vartheta,\varphi) \in[0,\pi]\times[0,2\pi)$
and therefore by the properties of complex numbers that
\[
\Re\bigl(a_{\ell m} Y_{\ell m}(\vartheta,\varphi)\bigr) =
L_{\ell m}(\vartheta) \bigl(\Re a_{\ell m} \cos(m\varphi) - \Im
a_{\ell m} \sin(m\varphi) \bigr).
\]
Let $((X^1_{\ell m}, X^2_{\ell m}), \ell\in\N_0, m=0,\ldots, \ell
)$ be a
sequence of independent, real-valued, standard normally distributed
random variables,
then
\[
\Re a_{\ell m} = \sqrt{\frac{A_\ell}{2}} X^1_{\ell m}
\quad\mbox{and}\quad- \Im a_{\ell m} = \Im a_{\ell m} = \sqrt{
\frac{A_\ell}{2}} X^2_{\ell m}
\]
in law for $\ell\in\N$ and $m=1,\ldots,\ell$ by Corollary~\ref
{corKLexpiGRF}.
Furthermore the corollary implies that
\[
a_{\ell0} = \sqrt{A_\ell} X^1_{\ell0}
\]
for $\ell\in\N_0$.
The insertion of these observations
into the Kar\-hunen--Lo\`eve expansion of~$T$
completes the proof.
\end{pf}

For a given sequence $((X^1_{\ell m}, X^2_{\ell m}), \ell\in\N_0,
m=0,\ldots, \ell)$ as specified in Lemma~\ref
{lemsimulatesequenceexpT}, set
\begin{eqnarray*}
T(y) &:=& \sum_{\ell=0}^\infty\Biggl(\sqrt{A_\ell}
X^1_{\ell0} L_{\ell
0}(\vartheta)
\\[-3pt]
&&\hspace*{19pt}{} +
\sqrt{2A_\ell} \sum_{m=1}^\ell
L_{\ell m}(\vartheta) \bigl(X^1_{\ell
m} \cos(m
\varphi) + X^2_{\ell m} \sin(m \varphi)\bigr)\Biggr).
\end{eqnarray*}
In what follows,
we truncate the series expansion in order to implement it
and prove its convergence.
To this end for $\gk\in\N$ we set
\begin{eqnarray*}
T^\gk(y)
&:=& \sum_{\ell=0}^\gk\Biggl(\sqrt{A_\ell}
X^1_{\ell0} L_{\ell
0}(\vartheta)
\\[-3pt]
&&\hspace*{19pt}{}+
\sqrt{2A_\ell} \sum_{m=1}^\ell
L_{\ell m}(\vartheta) \bigl(X^1_{\ell
m} \cos(m
\varphi) + X^2_{\ell m} \sin(m \varphi)\bigr)\Biggr),
\end{eqnarray*}
where $y= (\sin\vartheta\cos\varphi, \sin\vartheta\sin\varphi,
\cos\vartheta)$ and $(\vartheta,\varphi) \in[0,\pi]\times[0,2\pi)$.

\begin{proposition}\label{propconvtruncKLexpGRF}
Let the angular power spectrum $(A_\ell, \ell\in\N_0)$ of the
centered, isotropic Gaussian random field~$T$
decay algebraically with order $\ga>2$; that is,
there exist constants $C>0$ and $\ell_0 \in\N$ such that
$A_\ell\le C \cdot\ell^{-\ga}$ for all $\ell> \ell_0$.
Then the series of approximate random fields $(T^\gk, \gk\in\N)$
converges to the random field~$T$ in~$L^2(\gO;L^2(\IS^2))$, and the
truncation error is bounded by
\[
\bigl\llVert T - T^\gk\bigr\rrVert_{L^2(\gO;L^2(\IS^2))} \le\hat{C}
\cdot
\gk^{-(\ga-2)/2}
\]
for $\gk\ge\ell_0$, where
\[
\hat{C}^2 = C \cdot\biggl(\frac{2}{\ga-2} + \frac{1}{\ga-1}
\biggr).
\]
\end{proposition}
\begin{pf}
Since $((X^1_{\ell m}, X^2_{\ell m}), \ell\in\N_0, m=0,\ldots, \ell)$
is a sequence of independent, standard normally distributed random variables,
the error is equal to
\begin{eqnarray*}
&& \bigl\llVert T - T^\gk\bigr\rrVert_{L^2(\gO;L^2(\IS^2))}
\\[-3pt]
&&\qquad = \sum_{\ell=\gk+1}^\infty\Biggl(
A_\ell\E\bigl(\bigl(X_{\ell0}^1
\bigr)^2\bigr) \llVert Y_{\ell0}\rrVert_{L^2(\IS^2)}^2
\\[-3pt]
&&\hspace*{64pt} {} + 2 A_\ell\sum_{m=1}^\ell \bigl( \E\bigl(\bigl(X_{\ell
m}^1\bigr)^2\bigr)
\llVert\Re Y_{\ell m}\rrVert_{L^2(\IS^2)}^2
\\[-3pt]
&&\hspace*{115pt}{}+ \E\bigl(
\bigl(X_{\ell m}^2\bigr)^2\bigr) \llVert\Im
Y_{\ell m}\rrVert_{L^2(\IS^2)}^2 \bigr) \Biggr)
\\[-3pt]
&&\qquad = \sum_{\ell=\gk+1}^\infty\Biggl(
A_\ell\llVert Y_{\ell0}\rrVert_{L^2(\IS^2)}^2
+ 2 A_\ell\sum_{m=1}^\ell
\bigl( \llVert\Re Y_{\ell m}\rrVert_{L^2(\IS^2)}^2 +
\llVert\Im Y_{\ell m}\rrVert_{L^2(\IS^2)}^2 \bigr)
\Biggr).
\end{eqnarray*}
We observe that $\llVert Y_{\ell0}\rrVert _{L^2(\IS^2)}^2 = 1$ and
$\llVert \Re Y_{\ell m}\rrVert _{L^2(\IS^2)}^2 + \llVert \Im Y_{\ell
m}\rrVert _{L^2(\IS
^2)}^2 = 1$
for $\ell\in\N_0$ and $m=1,\ldots, \ell$.
Therefore the sum simplifies to
\[
\bigl\llVert T - T^\gk\bigr\rrVert_{L^2(\gO;L^2(\IS^2))} = \sum
_{\ell=\gk+1}^\infty(2\ell+ 1) A_\ell,
\]
which is bounded by
\[
\sum_{\ell=\gk+1}^\infty(2\ell+ 1)
A_\ell\le C \sum_{\ell=\gk+1}^\infty
\bigl(2 \ell^{-(\ga-1)} + \ell^{-\ga}\bigr)
\]
due to the assumed properties of the angular power spectrum.
We rewrite the sum and bound it by the corresponding integral which
leads to
\begin{eqnarray*}
&& \sum_{\ell=\gk+1}^\infty\bigl(2
\ell^{-(\ga-1)} + \ell^{-\ga}\bigr)
\\
&&\qquad  = \sum
_{\ell=1}^\infty\bigl(2 (\ell+ \gk)^{-(\ga-1)} + (
\ell+ \gk)^{-\ga}\bigr)
\\
&&\qquad  \le\int_0^\infty\bigl(2 (x+
\gk)^{-(\ga-1)} + ( x + \gk)^{-\ga
} \bigr) \,dx
\\
&&\qquad  = \biggl(\frac{2}{\ga-2} + \frac{1}{\ga-1}\gk^{-1} \biggr) \gk
^{-(\ga-2)}.
\end{eqnarray*}
This completes the proof since $\gk^{-1}$ is bounded by~$1$.
\end{pf}

In an implementation in MATLAB we verified the theoretical results.
We took as ``exact'' solution the random fields with $\gk=2^7$ terms
(since for larger $\gk$ the elements of the angular power
spectrum~$A_\ell$,
and therefore the increments were so small that MATLAB failed to
compute the series expansion).
Instead of the $L^2(\IS^2)$ error in space, we used the maximum over
all grid points which is a stronger error. In Figure~\ref
{figGRFL2error} the results and the theoretical convergence rates are
shown for $\ga= 3,5$. One observes that the simulation results match
the theoretical results in Proposition~\ref{propconvtruncKLexpGRF}.

Since we discussed $\IP$-almost sure H\"older continuity in
Section~\ref{secHoeldercont},
we are also interested in $\IP$-almost sure convergence rates of
the approximate random fields $(T^\gk, \gk\in\N)$.
Therefore we include the following result on convergence in $L^p(\gO
;L^2(\IS^2))$
since we need it for optimal
pathwise convergence rates of the approximate random fields
$(T^\gk, \gk\in\N)$.

\begin{figure}

\includegraphics{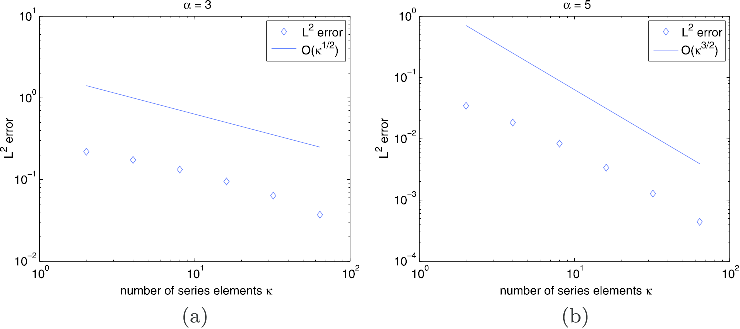}

\caption{Mean square error of the approximation of Gaussian random
fields with different
angular power spectrum and $1000$~Monte Carlo samples.
\textup{(a)}~Angular power spectrum with parameter $\ga=3$.
\textup{(b)}~Angular power spectrum with parameter $\ga=5$.}\label{figGRFL2error}
\end{figure}
%

\begin{theorem}\label{thmiGRFLpconv}
Let the angular power spectrum $(A_\ell, \ell\in\N_0)$ of the
centered, isotropic Gaussian random field~$T$
decay algebraically with order $\ga>2$; that is,
there exist constants $C>0$ and $\ell_0 \in\N$ such that $A_\ell\le
C \cdot\ell^{-\ga}$ for all $\ell> \ell_0$.
Then the series of approximate random fields $(T^\gk, \gk\in\N)$ converges
to the random field~$T$ in~$L^p(\gO;L^2(\IS^2))$ for any finite $p\ge1$,
and the truncation error is bounded by
\[
\bigl\llVert T - T^\gk\bigr\rrVert_{L^p(\gO;L^2(\IS^2))} \le
\hat{C}_p \cdot\gk^{-(\ga-2)/2}
\]
for $\gk\ge\ell_0$, where $\hat{C}_p$ is a constant that depends
on~$p$, $C$ and~$\ga$.
\end{theorem}

\begin{pf}
For $p \le2$ the result follows with Proposition~\ref
{propconvtruncKLexpGRF} and H\"older's inequality. Therefore let us
consider $p>2$ now. We prove the claim for $p=2m$, $m \in\N$. For all
other $p \in\R_+$, the result follows again by H\"older's inequality.
So let $m \in\N$; then Corollary~2.17 in~\citet{DPZ92} states
that there exists a constant~$C_m$ such that
\[
\bigl\llVert T - T^\gk\bigr\rrVert_{L^{2m}(\gO;L^2(\IS^2))}^{2m}
\le C_m \bigl\llVert T - T^\gk\bigr\rrVert
_{L^2(\gO;L^2(\IS^2))}^{2m}.
\]
Applying Proposition~\ref{propconvtruncKLexpGRF} we conclude that
\[
\bigl\llVert T - T^\gk\bigr\rrVert_{L^{2m}(\gO;L^2(\IS^2))}
\le(C_m)^{1/(2m)} \hat{C} \cdot\gk^{-(\ga-2)/2},
\]
where $\hat{C}$ is defined in Proposition~\ref{propconvtruncKLexpGRF},
%
which completes the proof.
\end{pf}

We have just shown that the convergence rate does not depend on~$p$.
This is necessary to get up to an epsilon the same sample convergence rates
as in the $p$th moment by the Borel--Cantelli lemma,
which we show in what follows.

\begin{corollary}\label{coriGRFP-asconv}
Let the angular power spectrum $(A_\ell, \ell\in\N_0)$ of the
centered, isotropic Gaussian random field~$T$
decay algebraically with order $\ga>2$; that is,
there exist constants $C>0$ and $\ell_0 \in\N$ such that
$A_\ell\le C \cdot\ell^{-\ga}$ for all $\ell> \ell_0$.
Then the series of approximate random fields $(T^\gk, \gk\in\N)$
converges to the random field~$T$ $\IP$-almost surely, and
for all $\gb< (\ga-2)/2$
the truncation error is asymptotically bounded by
\[
\bigl\llVert T - T^\gk\bigr\rrVert_{L^2(\IS^2)} \le
\gk^{-\gb}, \qquad\IP\mbox{-a.s.}
\]
\end{corollary}

\begin{pf}
Let $\gb< (\ga-2)/2$.
The Chebyshev inequality and Theorem~\ref{thmiGRFLpconv} imply that
\[
\IP\bigl(\bigl\llVert T-T^\gk\bigr\rrVert_{L^2(\IS^2)} \ge
\gk^{-\gb}\bigr) \le\gk^{\gb p} \E\bigl(\bigl\llVert
T-T^\gk\bigr\rrVert_{L^2(\IS^2)}^p\bigr) \le
\hat{C}_p^p \gk^{(\gb-(\ga-2)/2)p}.
\]
For all $p > ((\ga-2)/2-\gb)^{-1}$ the series
\[
\sum_{\gk=1}^\infty\gk^{(\gb-(\ga-2)/2)p} < +
\infty
\]
converges, and therefore the Borel--Cantelli lemma implies the claim.
\end{pf}
In Figure~\ref{figGRFpatherror}, we show the corresponding error plots
to Figure~\ref{figGRFL2error}, but instead of a Monte Carlo simulation
of the approximate $L^2(\gO;L^2(\IS^2))$ error, we plotted the error
of one sample. The convergence results coincide with the theoretical
results in Corollary~\ref{coriGRFP-asconv}.

\begin{figure}

\includegraphics{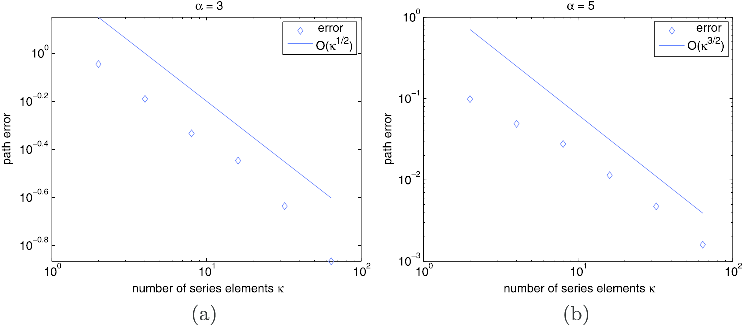}

\caption{Error of the approximation of a sample of Gaussian random
fields with different angular power spectrum.
\textup{(a)}~Angular power spectrum with parameter $\ga=3$.
\textup{(b)}~Angular power spectrum with parameter $\ga=5$.}\label{figGRFpatherror}
\end{figure}

%
\begin{figure}[b]

\includegraphics{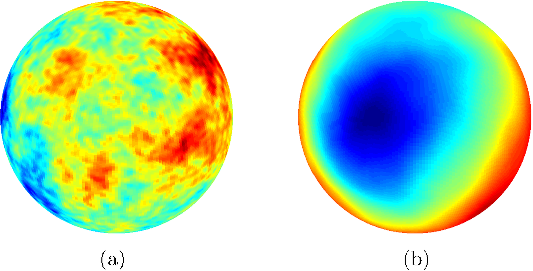}

\caption{Two samples of isotropic Gaussian random fields with
different angular power spectrum and truncation at $\gk=100$.
\textup{(a)}~Angular power spectrum with parameter $\ga=3$.
\textup{(b)}~Angular power spectrum with parameter $\ga=5$.}\label{figiGRF}
\end{figure}

To give the reader an idea of the structure of Gaussian random fields
in dependence of the decay of the angular power spectrum, we include
two samples in Figure~\ref{figiGRF}. Here we chose $A_\ell= (\ell
+1)^{-\ga}$ for $\ell\in\N_0$ and $\ga=3,5$. Therefore $A_\ell\le
\ell^{-\ga}$ for all $\ell\ge1$, which meets the assumptions of
Proposition~\ref{propconvtruncKLexpGRF}.
We plot the truncated series with $\gk= 100$ terms
(since larger $\gk$ do not affect the plots,
but the numerical accuracy suffers due to roundoff effects in MATLAB's
IEEE double precision format).
We remark that similarly to fast Fourier transforms,
there exist fast transforms for spherical harmonic functions [see,
e.g.,~\citet{M99}]
and the set of C routines \emph{SpharmonicKit} explained
in~\citet{HRKM03}.
These transforms
allow one to simulate isotropic Gaussian random fields with the
suggested approximations efficiently also for large choices of~$\gk$.

Analogously to the previous two sections, we finally want to give the
reader an idea of approximation results for isotropic Gaussian random
fields on spheres~$\IS^{d-1}$ in arbitrary dimensions $d\ge2$. So let
$T$ be an isotropic Gaussian random field on~$\IS^{d-1}$ for some
fixed $d \ge2$ with Kar\-hunen--Lo\`eve expansion
\[
T = \sum_{\ell=0}^\infty\sum
_{m=1}^{h(\ell,d)} a_{\ell m} S_{\ell m} =
\sum_{\ell=0}^\infty\sqrt{A_\ell}
\sum_{m=1}^{h(\ell,d)} X_{\ell m}
S_{\ell m},
\]
where $(X_{\ell m}, \ell\in\N_0, m=1,\ldots,h(\ell,d))$ is the
sequence of independent, standard normally distributed random variables
derived by $X_{\ell m} = a_{\ell m}/\sqrt{A_\ell}$.
We define similarly to $\IS^2$ the series of truncated random fields
$(T^\gk,\gk\in\N)$ by
\[
T^\gk
:= \sum_{\ell=0}^\gk
\sqrt{A_\ell} \sum_{m=1}^{h(\ell,d)}
X_{\ell
m} S_{\ell m}.
\]
Then we derive with the same computations as in the proofs of
Proposition~\ref{propconvtruncKLexpGRF}, Theorem~\ref{thmiGRFLpconv}
and Corollary~\ref{coriGRFP-asconv} convergence rates in $L^p$ and
$\IP$-almost sure sense that depend on the dimension~$d-1$ of the sphere.

\begin{theorem}
Let $T$ be a Gaussian isotropic random field on~$\IS^{d-1}$ with
angular power spectrum $(A_\ell, \ell\in\N_0)$ that
decays algebraically with order $\ga>2$; that is,
there exist constants $C>0$ and $\ell_0 \in\N$ such that $A_\ell\le
C \cdot\ell^{-\ga}$ for all $\ell> \ell_0$.
Then the series of approximate random fields $(T^\gk, \gk\in\N)$ converges
to the random field~$T$ in~$L^p(\gO;L^2(\IS^{d-1}))$ for any finite
$p\ge1$,
and the truncation error is bounded by
\[
\bigl\llVert T - T^\gk\bigr\rrVert_{L^p(\gO;L^2(\IS^{d-1}))} \le
C_p \cdot\gk^{-(\ga+1-d)/2}
\]
for $\gk\ge\ell_0$, where
$C_p>0$ is a constant that depends on~$d$, $p$ and~$\ga$.

Furthermore the series of approximate random fields
$(T^\gk, \gk\in\N)$ converges to the random field~$T$
$\IP$-almost surely, and
for all $\gb< (\ga+1-d)/2$
the truncation error is asymptotically bounded by
\[
\bigl\llVert T - T^\gk\bigr\rrVert_{L^2(\IS^{d-1})} \le
\gk^{-\gb}, \qquad\IP\mbox{-a.s.}
\]
%
\end{theorem}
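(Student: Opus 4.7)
The plan is to repeat, step for step, the three-stage argument developed for $\IS^2$ in Proposition~\ref{prop:conv_trunc_KLexp_GRF}, Theorem~\ref{thm:iGRF_Lp_conv}, and Corollary~\ref{cor:iGRF_P-as_conv}, with the only genuinely new ingredient being the dimension-dependent multiplicity $h(\ell,d)$ of the $\ell$-th eigenspace of~$\Delta_{\IS^{d-1}}$. The sharpening of the exponent from $(\ga-2)/2$ (which is $(\ga+1-d)/2$ for $d=3$) to $(\ga+1-d)/2$ is then just the bookkeeping of this multiplicity.

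First I would compute the mean square error. Since $(S_{\ell m})$ is an orthonormal system in $L^2(\IS^{d-1})$ and $(X_{\ell m})$ are independent standard normals, Fubini and orthogonality give
\begin{equation*}
\|T - T^\gk\|_{L^2(\gO;L^2(\IS^{d-1}))}^2
= \sum_{\ell=\gk+1}^\infty A_\ell \sum_{m=1}^{h(\ell,d)} \|S_{\ell m}\|_{L^2(\IS^{d-1})}^2
= \sum_{\ell=\gk+1}^\infty A_\ell\, h(\ell,d).
\end{equation*}
Using $h(\ell,d) \simeq \ell^{d-2}$ from Stirling's inequalities (as recorded earlier in the paper) together with the hypothesis $A_\ell \le C \ell^{-\ga}$ for $\ell > \ell_0$, the tail is bounded by a constant multiple of $\sum_{\ell>\gk} \ell^{-\ga + d -2}$, which by integral comparison is of order $\gk^{-(\ga-d+1)}$ provided $\ga > d-1$; this holds here since $\ga > 2$ covers the dimensions of applied interest and in any event is precisely the summability needed for the field to be defined. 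Taking square roots yields the claimed rate $\gk^{-(\ga+1-d)/2}$ in~$L^2(\gO;L^2(\IS^{d-1}))$.

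Second, I would pass from $p=2$ to general $p\ge 1$ verbatim as in the proof of Theorem~\ref{thm:iGRF_Lp_conv}: for $p \in [1,2]$ use H\"older, and for $p=2m$, $m\in\N$, invoke Corollary~2.17 in~\cite{DPZ92} (a Gaussian moment equivalence on Hilbert space, applicable because $T - T^\gk$ is a Gaussian element of~$L^2(\IS^{d-1})$) to obtain
\begin{equation*}
\|T - T^\gk\|_{L^{2m}(\gO;L^2(\IS^{d-1}))} \le C_m^{1/(2m)} \|T - T^\gk\|_{L^2(\gO;L^2(\IS^{d-1}))},
\end{equation*}
which combined with the first step produces the stated $C_p\gk^{-(\ga+1-d)/2}$ bound; the remaining real $p>2$ then follow by H\"older's inequality.

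Third, for $\IP$-almost sure convergence I would fix $\gb < (\ga+1-d)/2$, apply Chebyshev's inequality together with the $L^p$ bound just proved to obtain
\begin{equation*}
\IP\bigl(\|T - T^\gk\|_{L^2(\IS^{d-1})} \ge \gk^{-\gb}\bigr)
\le \gk^{\gb p}\, \E\bigl(\|T-T^\gk\|_{L^2(\IS^{d-1})}^p\bigr)
\le C_p^p\, \gk^{(\gb - (\ga+1-d)/2)p},
\end{equation*}
and then choose $p > 1/((\ga+1-d)/2 - \gb)$ so that the resulting series in $\gk$ converges; the Borel--Cantelli lemma finishes the argument. There is no serious obstacle, only the need to be careful with the shift of exponent: every place where the $\IS^2$ computation produced a factor $2\ell+1$ (i.e., the dimension of $\cH_\ell(\IS^2)$) is now replaced by $h(\ell,d)\simeq \ell^{d-2}$, and propagating this through the geometric series estimate is what converts $(\ga-2)/2$ into $(\ga+1-d)/2$.
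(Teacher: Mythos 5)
Your proposal is correct and follows essentially the same route as the paper: reduce to the $p=2$ computation with the multiplicity $2\ell+1$ replaced by $h(\ell,d)\simeq \ell^{d-2}$, then reuse the Gaussian moment equivalence and the Chebyshev--Borel--Cantelli argument verbatim from the $\IS^2$ case. Your explicit remark that the integral comparison requires $\ga>d-1$ (which is also exactly when the claimed rate is positive) is a point the paper leaves implicit, but otherwise the two arguments coincide.
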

\begin{pf}
This theorem is the generalization of Proposition~\ref
{propconvtruncKLexpGRF}, Theorem~\ref{thmiGRFLpconv} and
Corollary~\ref{coriGRFP-asconv}. The proofs of Theorem~\ref
{thmiGRFLpconv} and Corollary~\ref{coriGRFP-asconv} are exactly the
same except that the input parameters change. So it remains to show the
first claim of the theorem for $p=2$, which is the equivalent of
Proposition~\ref{propconvtruncKLexpGRF}. With the observation that the
spherical harmonics have norm one in $L^2(\IS^{d-1})$, the
independence of the normal random variables and that
$h(\ell,d) \simeq\ell^{d-2}$
by Stirling's inequalities,
we obtain
\[
\bigl\llVert T - T^\gk\bigr\rrVert_{L^2(\gO;L^2(\IS^{d-1}))} \le C \sum
_{\ell=\gk+1}^\infty\ell^{-(\ga- d + 2)}.
\]
The continuation of the proof of Proposition~\ref{propconvtruncKLexpGRF}
with these new parameters yields the claimed convergence rate.
\end{pf}
At this point we remark that a reference which is also devoted to
approximations of Gaussian isotropic random fields
on~$\IS^{d-1}$ is~\citet{KK01},
where the authors investigate different
types of errors and convergence than we do.
In this work
probabilities are bounded for $L^p(\IS^{d-1})$ estimates in space;
that is, quantities of the form
\[
\IP\bigl( \bigl\llVert T - T^\gk\bigr\rrVert_{L^p(\IS^{d-1})} >
\varepsilon\bigr) < \delta
\]
are considered.
These estimates cannot be used to derive neither convergence rates in
$L^p(\gO;L^2(\IS^{d-1}))$ nor $\IP$-almost sure convergence rates in
$L^2(\IS^{d-1})$ to the best of our knowledge.
Since the obtained bounds for the probabilities in Theorem~2
in~\citet{KK01}
do not depend on the truncation parameter~$\gk$, it
is not clear how the Borel--Cantelli lemma could be applied.
%
\section{Lognormal isotropic Gaussian random fields}\label{sechailstones}
In this section we consider lognormal random fields on~$\IS^2$; that
is, if $T$ is an isotropic Gaussian random field on~$\IS^2$, then we
are interested in $\exp(T)$ given by $\exp(T(x))$ for all $x \in\IS
^2$. These random fields are especially of interest when modeling
Saharan dust particles [see, e.g., \citet{NMR03}], feldspar
particles [cp.~\citet{VNKvdZ06}] and ice crystals
[cp.~\citet{NMF04}].
We show in the following that the sample regularity of a lognormal
random field is the same as that of the underlying Gaussian random
field. This is done by first proving regularity in $L^p(\gO;\R)$ and
then applying the Kolmogorov--Chentsov theorem similarly to
Section~\ref{secHoeldercont}.

\begin{lemma}\label{lemlognormLp-distance}
Let $T$ be an isotropic Gaussian random field on~$\IS^2$ with angular power
spectrum $(A_\ell, \ell\in\N_0)$.
If the angular power spectrum satisfies that $A_\ell\le C \ell^{-\ga
}$ for all $\ell\in\N$,
some $\ga> 2$ and some constant~$C$, then for all $p \in\N$ and $\gb
< \ga-2$, $\gb\le2$
there exists a constant~$C_{\gb,p}$ such that for all $x,y \in\IS^2$
it holds that
\[
\bigl\llVert\exp\bigl(T(x)\bigr) - \exp\bigl(T(y)\bigr)\bigr\rrVert
_{L^{p}(\gO;\R)} \le2 \exp\bigl(p k(0)\bigr) C_{\gb,p}
\,d(x,y)^{\gb/2}.
\]
\end{lemma}

\begin{pf}
Let us first observe that for $a,b \in\R$ it holds that
\[
\bigl\llvert e^a - e^b\bigr\rrvert= \biggl\llvert
\int_b^a e^z \,dz \biggr\rrvert\le
\llvert a-b\rrvert\max\bigl\{ e^a,e^b\bigr\} \le\llvert
a-b\rrvert\bigl(e^a + e^b\bigr).
\]
This implies for $x,y \in\IS^2$ that
\begin{eqnarray*}
&& \bigl\llVert\exp\bigl(T(x)\bigr) - \exp\bigl(T(y)\bigr)\bigr\rrVert
_{L^{p}(\gO;\R)}^p
\\
&& \qquad\le\E\bigl( \bigl(\exp\bigl(T(x)\bigr) + \exp\bigl(T(y)\bigr)
\bigr)^p \bigl\llvert T(x) - T(y)\bigr\rrvert^p \bigr)
\\
&& \qquad\le\E\bigl( \bigl(\exp\bigl(T(x)\bigr) + \exp\bigl(T(y)\bigr)
\bigr)^{2p} \bigr)^{1/2} \cdot\E\bigl(\bigl\llvert T(x) - T(y)
\bigr\rrvert^{2p}\bigr)^{1/2},
\end{eqnarray*}
where we applied H\"older's inequality in the last step.
By Lemma~\ref{lemboundmomentsTx-Ty} the second term is bounded by
\[
\E\bigl(\bigl\llvert T(x) - T(y)\bigr\rrvert^{2p}
\bigr)^{1/2} \le C_{\gb,p}^p d(x,y)^{p\gb/2}
\]
for any $\gb< \ga-2$, $\gb\le2$. The first term satisfies that
\begin{eqnarray*}
&& \E\bigl( \bigl(\exp\bigl(T(x)\bigr) + \exp\bigl(T(y)\bigr)
\bigr)^{2p} \bigr)^{1/2}
\\
&& \qquad\le2^{(2p-1)/2} \bigl(\E\bigl( \exp\bigl(2pT(x)\bigr)\bigr) + \E
\bigl( \exp\bigl(2pT(y)\bigr)\bigr) \bigr)^{1/2}.
\end{eqnarray*}
Since $T(x)$ and $T(y)$ are real-valued Gaussian random variables with
expectation zero and variance $k(0)$, the moment generating function is
given by
\[
\E\bigl( \exp\bigl(2pT(x)\bigr)\bigr) = \exp\bigl(2p^2 k(0)\bigr),
\]
which implies that
\begin{eqnarray*}
\E\bigl( \bigl(\exp\bigl(T(x)\bigr) + \exp\bigl(T(y)\bigr)
\bigr)^{2p} \bigr)^{1/2} & \le&2^{(2p-1)/2} 2^{1/2}
\exp\bigl(p^2 k(0)\bigr)
\\
& = &2^p \exp\bigl(p^2 k(0)\bigr).
\end{eqnarray*}
Therefore we conclude that
\begin{eqnarray*}
\bigl\llVert\exp\bigl(T(x)\bigr) - \exp\bigl(T(y)\bigr)\bigr\rrVert
_{L^{p}(\gO;\R)}
\le2 \exp\bigl(p k(0)\bigr) C_{\gb,p}
\,d(x,y)^{\gb/2},
\end{eqnarray*}
which completes the proof.
\end{pf}
The lemma enables us to conclude that the lognormal random field of an
isotropic Gaussian random field~$T$ has the same sample H\"older
continuity properties as~$T$.

\begin{corollary}\label{corHoelderContLogNormalGRF}
Let $T$ be an isotropic Gaussian random field on~$\IS^2$ with angular
power spectrum
$(A_\ell, \ell\in\N_0)$. If the angular power spectrum satisfies that
$A_\ell\le C \cdot\ell^{-\ga}$ for all $\ell\in\N$, some $\ga> 2$
and some constant~$C$, then there exists a modification of~$\exp(T)$ that
is H\"older continuous with exponent~$\gg$ for all $\gg< (\ga-2)/2$,
$\gg\le1$.
\end{corollary}

\begin{pf}
The proof is the same as the one of Theorem~\ref{thmHoeldercontiGRF},
where we apply Lemma~\ref{lemlognormLp-distance} instead of Lemma~\ref
{lemboundmomentsTx-Ty}.
\end{pf}

In Figure~\ref{fighailstones} we took the Gaussian random field
samples that are shown in Figure~\ref{figiGRF} and plotted the
deformed sphere with the corresponding lognormal radius which is done
when modeling dust or feldspar particles, respectively, ice crystals.

In Theorem~\ref{thmPathRegIsoGRF} we have shown the existence of
$k$-times continuously differentiable modifications of isotropic GRFs
depending on the convergence of the corresponding angular power spectrum.
The compactness of the unit sphere, the smoothness of the exponential function
and the chain rule imply as a direct consequence that
the same properties hold for the corresponding lognormal random fields.
%
\begin{corollary}
\label{corPathRegIsoLogNormRF}
Let $T$ be a centered, isotropic Gaussian random field on~$\IS^2$ with
angular power spectrum
$(A_\ell, \ell\in\N_0)$.
If the angular power spectrum satisfies
Assumption~\ref{asssummability-Al} with $d=3$
for some $\beta> 0$,
then there exists a $C^\gg(\IS^2)$-valued modification of the
corresponding lognormal random
field~$\exp(T)$ for all $\gg< \gb/2$; that is,
the modification is $k$-times continuously differentiable with
$k=\lceil\gb/2 \rceil-1$ and
the $k$th derivatives are H\"older continuous with exponent $\gg-k$.
\end{corollary}

\begin{figure}

\includegraphics{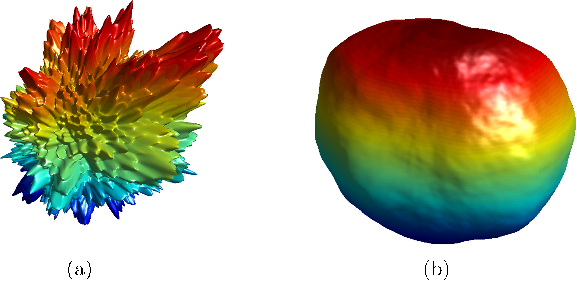}

\caption{Corresponding lognormal samples to Figure~\protect\ref
{figiGRF} with $\gk=100$.
\textup{(a)}~Angular power spectrum with parameter $\ga=3$.
\textup{(b)}~Angular power spectrum with parameter $\ga=5$.}\label{fighailstones}
\end{figure}
%

\begin{remark}
The results of this section directly
generalize to (isotropic) lognormal random fields on~$\IS^{d-1}$,
$d \ge2$, where the H\"older exponent obtained from the
decay condition $A_\ell\le C \cdot\ell^{-\ga}$
changes to $\gg< \ga- d+1$ in
Corollary~\ref{corHoelderContLogNormalGRF},
and Assumption~\ref{asssummability-Al} with $d=3$
in Corollary~\ref{corPathRegIsoLogNormRF} has to be replaced by
Assumption~\ref{asssummability-Al}.
%
%
\end{remark}
%
\section{Stochastic partial differential equations on the sphere}\label{secstochheateqn}
In this section we consider the heat equation on the sphere with
additive $Q$-Wiener noise as an example of a
stochastic partial differential equation (SPDE) on~$\IS^2$.
To discuss stochastic partial differential equations
we first introduce $Q$-Wiener processes on the sphere.

To this end
let us consider $Q$-Wiener processes that take
values in~$L^2(\IS^2)$ and that are isotropic in space.
Then, by Lemma~\ref{lemsimulatesequenceexpT} and by
the construction of \mbox{$Q$-}Wiener processes out of GRFs as was done
in an abstract setting, for example, in~\citet{DPZ92,PR07},
a $Q$-Wiener process taking values in~$L^2(\IS^2)$ can be
characterized by the
Kar\-hunen--Lo\`eve expansion
\begin{eqnarray*}
W(t,y) & = &\sum_{\ell=0}^\infty\sum
_{m=-\ell}^\ell a_{\ell m}(t)
Y_{\ell m}(y)
\\
& = &\sum_{\ell=0}^\infty\Biggl(
\sqrt{A_\ell} \gb^1_{\ell0}(t)
Y_{\ell0}(y)
\\
&&\hspace*{20pt} {}+ \sqrt{2A_\ell} \sum_{m=1}^\ell
\bigl(\gb^1_{\ell
m}(t) \Re Y_{\ell m}(y) +
\gb^2_{\ell m}(t) \Im Y_{\ell m}(y)\bigr) \Biggr)
\\
& = &\sum_{\ell=0}^\infty\Biggl(
\sqrt{A_\ell} \gb^1_{\ell0}(t) L_{\ell0}(
\vartheta)
\\
&&\hspace*{20pt} {} + \sqrt{2A_\ell} \sum_{m=1}^\ell
L_{\ell
m}(\vartheta) \bigl(\gb^1_{\ell m}(t) \cos(m
\varphi) + \gb^2_{\ell
m}(t) \sin(m \varphi)\bigr) \Biggr),
\end{eqnarray*}
where again $y= (\sin\vartheta\cos\varphi, \sin\vartheta\sin
\varphi, \cos\vartheta)$ and
$((\gb^1_{\ell m}, \gb^2_{\ell m}), \ell\in\N_0, m=0,\ldots, \ell)$
is a sequence of independent, real-valued Brownian motions with
$\gb^2_{\ell0} = 0$ for $\ell\in\N_0$ and $t \in\R_+$.
The covariance operator~$Q$ is characterized similarly to the
introduction in~\citet{LLS13} by
\begin{eqnarray*}
Q Y_{LM} & = &\sum_{\ell=0}^\infty
\sum_{m=-\ell}^\ell\E\bigl(
\bigl(W(1),Y_{LM}\bigr)_H \overline{\bigl(W(1),Y_{\ell m}
\bigr)_H} \bigr) Y_{\ell m}
\\
& = &\sum_{\ell=0}^\infty\sum
_{m=-\ell}^\ell\E\bigl( a_{LM}(1)
\overline{a_{\ell m}(1)} \bigr) Y_{\ell m}
\\
& = &A_L Y_{LM}
\end{eqnarray*}
for $L \in\N_0$ and $M=-L,\ldots,L$; that is, the eigenvalues of~$Q$
are characterized by the angular power spectrum $(A_\ell, \ell\in\N_0)$,
and the eigenfunctions are the spherical harmonic functions.

Let us calculate $\llVert W(t)\rrVert _{L^2(\gO;L^2(\IS^2))}$ for $t
\in\R_+$ next.
It holds similarly to the proof of Proposition~\ref{propconvtruncKLexpGRF} that
\begin{eqnarray*}
&& \bigl\llVert W(t)\bigr\rrVert_{L^2(\gO;L^2(\IS^2))}^2
\\
&&\qquad  = \sum_{\ell=0}^\infty\Biggl(
A_\ell\E\bigl(\bigl(\gb_{\ell0}^1(t)
\bigr)^2\bigr) \llVert Y_{\ell0}\rrVert_{L^2(\IS^2)}^2
\\
&&\hspace*{20pt}\quad\qquad{} + 2 A_\ell\sum_{m=1}^\ell
\bigl( \E\bigl(\bigl(\gb_{\ell
m}^1(t)\bigr)^2
\bigr) \llVert\Re Y_{\ell m}\rrVert_{L^2(\IS^2)}^2
\\
&&\hspace*{105pt}{} + \E
\bigl(\bigl(\gb_{\ell m}^2(t)\bigr)^2\bigr) \llVert
\Im Y_{\ell m}\rrVert_{L^2(\IS^2)}^2 \bigr) \Biggr)
\\
&&\qquad = t \sum_{\ell= 0}^\infty(2
\ell+1)A_\ell= t \trace Q.
\end{eqnarray*}
This expression is finite for any finite $t \in\R_+$, if
$\sum_{\ell=0}^\infty\ell A_\ell$ is finite.

With these definitions of
$Q$-Wiener processes as well as the Laplace operator on the sphere in
Section~\ref{seciGRF},
we are now in position
to write down the stochastic heat equation
%
\begin{equation}
\label{eqstochheateqn} dX(t) = \Delta_{\IS^2} X(t) \,dt + dW(t)
\end{equation}
with initial condition $X(0) = X_0 \in L^2(\gO;L^2(\IS^2))$, where $t
\in\IT= [0,T]$,\break \mbox{$T < + \infty$}.

Looking for solutions in~$L^2(\IS^2)$, we rewrite equation~(\ref
{eqstochheateqn}) to
\begin{eqnarray*}
X(t) &=& X_0 + \int_0^t
\Delta_{\IS^2} X(s) \,ds + \int_0^t
dW(s)
\\
&=& X_0 + \int_0^t
\Delta_{\IS^2} X(s) \,ds + W(t),
\end{eqnarray*}
and further, since the spherical harmonic functions~$\cY$ form an
orthonormal basis of~$L^2(\IS^2)$ and are eigenfunctions of~$\Delta
_{\IS^2}$, we have that
\begin{eqnarray*}
&& \sum_{\ell=0}^\infty\sum
_{m=-\ell}^\ell\bigl(X(t), Y_{\ell
m}\bigr)_{L^2(\IS^2)} Y_{\ell m}
\\[-3pt]
&& \qquad= \sum_{\ell=0}^\infty\sum
_{m=-\ell}^\ell\biggl( (X_0,
Y_{\ell m})_{L^2(\IS^2)} Y_{\ell m}
\\[-3pt]
&&\hspace*{46pt}\quad\qquad {}+ \int_0^t \bigl(X(s),
Y_{\ell m}\bigr)_{L^2(\IS^2)} \Delta_{\IS^2} Y_{\ell m}
\,ds + a_{\ell m}(t) Y_{\ell m} \biggr)
\\[-3pt]
&& \qquad= \sum_{\ell=0}^\infty\sum
_{m=-\ell}^\ell\biggl( (X_0,
Y_{\ell m})_{L^2(\IS^2)}
\\[-3pt]
&&\hspace*{45pt}\quad\qquad {}- \ell(\ell+1)\int_0^t
\bigl(X(s), Y_{\ell m}\bigr)_{L^2(\IS
^2)} \,ds + a_{\ell m}(t)
\biggr) Y_{\ell m}.
\end{eqnarray*}
This is equivalent to solve for all $\ell\in\N_0$ and $m=-\ell
,\ldots,\ell$ the stochastic (ordinary) differential equation
\begin{eqnarray*}
&&\bigl(X(t), Y_{\ell m}\bigr)_{L^2(\IS^2)}
\\[-3pt]
&&\qquad = (X_0,
Y_{\ell m})_{L^2(\IS^2)} - \ell(\ell+1) \int_0^t
\bigl(X(s), Y_{\ell m}\bigr)_{L^2(\IS^2)} \,ds + a_{\ell m}(t).
\end{eqnarray*}
The variations of constants formula yields
\[
\bigl(X(t), Y_{\ell m}\bigr)_{L^2(\IS^2)} = e^{-\ell(\ell+1)t}
(X_0, Y_{\ell m})_{L^2(\IS^2)} + \int_0^t
e^{-\ell(\ell+1)(t-s)} \,d a_{\ell m}(s).
\]
So overall the solution of stochastic heat equation~(\ref
{eqstochheateqn}) reads
\begin{eqnarray*}
X(t) & = &\sum_{\ell=0}^\infty\sum
_{m=-\ell}^\ell\biggl( e^{-\ell(\ell+1)t}
(X_0, Y_{\ell m})_{L^2(\IS^2)} + \int_0^t
e^{-\ell(\ell+1)(t-s)} \,d a_{\ell m}(s) \biggr) Y_{\ell
m}
\\[-3pt]
& = &\sum_{\ell=0}^\infty\Biggl( \sum
_{m=-\ell}^\ell e^{-\ell(\ell
+1)t} (X_0,
Y_{\ell m})_{L^2(\IS^2)} Y_{\ell m}
\\[-3pt]
&&\hspace*{20pt}{} + \sqrt{A_\ell} \Biggl( \int_0^t
e^{-\ell(\ell+1)(t-s)} \,d\gb^1_{\ell0}(s) Y_{\ell0}
\\[-3pt]
&&\hspace*{58pt} {} + \sqrt{2} \sum_{m=1}^\ell
\biggl( \int_0^t e^{-\ell(\ell+1)(t-s)} \,d
\gb^1_{\ell m}(s) \Re Y_{\ell
m}
\\[-3pt]
&&\hspace*{109pt} {} + \int_0^t
e^{-\ell(\ell+1)(t-s)} \,d \gb^2_{\ell m}(s) \Im Y_{\ell m}
\biggr) \Biggr) \Biggr)
\\[-4pt]
& =:& \sum_{\ell=0}^\infty X_\ell(t),
\end{eqnarray*}
and we choose the sequence of stochastic processes $(X_\ell, \ell\in
\N_0)$ accordingly.
Each process in this sequence satisfies the recursion formula
\begin{eqnarray*}
X_\ell(t+h) & = &e^{-\ell(\ell+1)h}X_\ell(t)
\\
&& {} + \sqrt{A_\ell} \Biggl( \int_t^{t+h}
e^{-\ell(\ell+1)(t+h-s)} \,d\gb^1_{\ell0}(s) Y_{\ell0}
\\
&&\hspace*{39pt} {}+ \sqrt{2} \sum_{m=1}^\ell
\biggl( \int_t^{t+h} e^{-\ell(\ell+1)(t+h-s)} \,d
\gb^1_{\ell m}(s) \Re Y_{\ell m}
\\
&&\hspace*{91pt} {} + \int_t^{t+h}
e^{-\ell(\ell+1)(t+h-s)} \,d \gb^2_{\ell m}(s) \Im Y_{\ell m}
\biggr) \Biggr).
\end{eqnarray*}
Similarly to~\citet{JK09}, we observe that by the It\^o formula
[see, e.g.,~\citet{K06}],
%
\begin{equation}
\label{eqstochconv} \int_0^t e^{-\ell(\ell+1)(t-s)} \,d
\gb^i_{\ell m} (s)
\end{equation}
is normally distributed with mean zero and
variance $(2\ell(\ell+1))^{-1} (1-e^{-2\ell(\ell+1)t})$ for
$\ell\in\N$, $m=1,\ldots,\ell$ and $i=1,2$.
This implies that
\[
\int_t^{t+h} e^{-\ell(\ell+1)(t+h-s)} \,d
\gb^i_{\ell m} (s) \sim\cN\bigl(0,\gs_{\ell h}^2
\bigr),
\]
where
\[
\gs_{\ell h}^2:= \bigl(2\ell(\ell+1)\bigr)^{-1}
\bigl(1-e^{-2\ell(\ell+1)h}\bigr).
\]
For $\ell= 0$ we have no convolution integral, and
therefore the distribution of the expression is that of (the increment of)
a standard Brownian motion, that is, \mbox{$\gs_{0 h}^2 = h$.}

For the simulation of paths of the solution, we have to compute the
solution on a discrete time grid $0 = t_0 < t_1 < \cdots< t_n = T$, $n
\in\N$, on which the path of the Brownian motion, respectively,
stochastic integral~(\ref{eqstochconv}), is known. Stochastic
integral~(\ref{eqstochconv}) at time~$t_k$, $k = 0,\ldots,n$, is
equal in law to a sum of weighted, standard normally distributed random
variables
\begin{eqnarray*}
&& \int_0^{t_k} e^{-\ell(\ell+1)(t_k-s)} \,d
\gb^i_{\ell m} (s)
\\
&& \qquad= \sum_{j=0}^{k-1} \int
_{t_j}^{t_{j+1}} e^{-\ell(\ell
+1)(t_k-s)} \,d
\gb^i_{\ell m} (s)
\\
&& \qquad= \sum_{j=0}^{k-1}
e^{-\ell(\ell+1)(t_k-t_{j+1})}\int_{t_j}^{t_{j+1}} e^{-\ell(\ell+1)(t_{j+1}-s)}
\,d\gb^i_{\ell m} (s)
\\
&& \qquad= \sum_{j=0}^{k-1}
e^{-\ell(\ell+1)\sum_{i=j+1}^k h_i} \gs_{\ell h_j} X^i_{\ell m}(j),
\end{eqnarray*}
where $h_j = t_{j+1} - t_j$, $j = 0,\ldots, n-1$ and $(X^i_{\ell
m}(j), \ell\in\N_0, m=0,\ldots,\ell, i=1,2, j=0,\ldots,n-1)$ is a
sequence of independent, standard normally distributed random
variables. This enables us to write down the solution of equation~(\ref
{eqstochheateqn}) recursively,
\begin{eqnarray*}
&& X_\ell(t_{k+1})
\\
&& \qquad= e^{-\ell(\ell+1)h_k} X_\ell(t_k)
\\
&&\quad\qquad {} + \sqrt{A_\ell} \gs_{\ell h_k} \Biggl(
X^1_{\ell0}(k) Y_{\ell0} + \sqrt{2} \sum
_{m=1}^\ell\bigl( X^1_{\ell m}(k)
\Re Y_{\ell m} + X^2_{\ell m}(k) \Im Y_{\ell m}
\bigr) \Biggr)
\end{eqnarray*}
for all $\ell\in\N_0$ and $k=0,\ldots,n-1$.
Using the notation of Lemma~\ref{lemsimulatesequenceexpT}, we rewrite
the recursion to
\begin{eqnarray*}
&& X_\ell(t_{k+1})
\\
&& \qquad= e^{-\ell(\ell+1)h_k} X_\ell(t_k) +
\psi_\ell(k)
\\
&& \qquad= e^{-\ell(\ell+1)t_{k+1}} \sum_{m=-\ell}^\ell(X_0,
Y_{\ell m})_{L^2(\IS^2)} Y_{\ell m} + \sum
_{j=0}^k e^{-\ell(\ell+1)\sum_{i=j+1}^{k} h_i} \psi_\ell(j),
\end{eqnarray*}
where the increments are given by
\begin{eqnarray*}
\psi_\ell(j,y) &=& \sqrt{A_\ell} \gs_{\ell h_j}
\Biggl( X^1_{\ell0}(j) L_{\ell0}(\vartheta)
\\
&&\hspace*{45pt}{} + \sqrt{2} \sum_{m=1}^\ell
L_{\ell m}(\vartheta) \bigl( X^1_{\ell m}(j) \cos(m
\varphi) + X^2_{\ell m}(j) \sin(m\varphi) \bigr) \Biggr)
\end{eqnarray*}
for $y= (\sin\vartheta\cos\varphi, \sin\vartheta\sin\varphi,
\cos\vartheta) \in\IS^2$ and $j=0,\ldots,n-1$.
We observe for later use that
\begin{eqnarray*}
&& \sum_{j=0}^k e^{-\ell(\ell+1) \sum_{i=j+1}^{k-1} h_i}
\psi_\ell(j)
\\
&& \qquad= \sqrt{A_\ell} \Biggl( \Biggl(\sum
_{j=0}^k e^{-\ell(\ell+1)\sum_{i=j+1}^{k} h_i} \gs_{\ell h_j}
X^1_{\ell0}(j)\Biggr) L_{\ell0}(\vartheta)
\\
&&\hspace*{27pt}\quad\qquad {}+ \sqrt{2} \sum_{m=1}^\ell
L_{\ell m}(\vartheta) \Biggl( \Biggl(\sum_{j=0}^k
e^{-\ell(\ell+1)\sum_{i=j+1}^{k} h_i} \gs_{\ell h_j} X^1_{\ell
m}(j)\Biggr)
\cos(m \varphi)
\\
&&\hspace*{116pt}\quad\qquad {}+ \Biggl(\sum_{j=0}^k
e^{-\ell(\ell+1)\sum_{i=j+1}^{k}
h_i} \gs_{\ell h_j} X^2_{\ell m}(j)\Biggr)
\\
&&\hspace*{291pt}{}\times \sin(m\varphi) \Biggr) \Biggr)
\end{eqnarray*}
and that
\[
\sum_{j=0}^k e^{-\ell(\ell+1)\sum_{i=j+1}^{k} h_i}
\gs_{\ell h_j} X^i_{\ell m}(j)
\]
is a normally distributed random variable with mean zero and variance
\[
\bigl(e^{-\ell(\ell+1)\sum_{i=j+1}^{k} h_i} \gs_{\ell h_j}\bigr)^2 =
\frac{1}{2\ell(\ell+1)} \bigl(1 - e^{-2\ell(\ell+1)t_{k+1}}\bigr) = \gs
_{\ell t_{k+1}}^2.
\]
This implies that we have equality in law of
\begin{eqnarray*}
&& \sum_{j=0}^k e^{-\ell(\ell+1)\sum_{i=j+1}^{k} h_i}
\psi_\ell(j,y)
\\
&& \qquad= \sqrt{A_\ell} \gs_{\ell t_{k+1}} \Biggl(
X^1_{\ell0} L_{\ell0}(\vartheta)
\\
&&\hspace*{51pt}\quad\qquad {} + \sqrt{2} \sum_{m=1}^\ell
L_{\ell m}(\vartheta) \bigl( X^1_{\ell m} \cos(m
\varphi) + X^2_{\ell m} \sin(m\varphi) \bigr) \Biggr),
\end{eqnarray*}
where $((X^1_{\ell m}, X^2_{\ell m}), \ell\in\N_0, m=0,\ldots,\ell
)$ is a sequence of
independent, standard normally distributed random variables.

To implement the solution, we calculate $X_\ell$ exactly for
finitely many $\ell\in\N_0$ on a finite time and space grid.
One way to discretize the sphere is to take an equidistant grid
in $\vartheta\in[0,\pi]$ and $\varphi\in[0,2\pi)$. Then we add the
calculated $X_\ell$ and get an approximate solution; that is,
we simulate the approximate solution~$X^{\gk}$, $\gk\in\N_0$ by
\[
X^{\gk} = \sum_{\ell=0}^{\gk}
X_\ell
\]
on finitely many time and space points.
In what follows let us estimate the mean square
error when truncation of the series expansion at $\gk\in\N$ is done.

\begin{lemma}\label{lemconvstochheat}
Let $t \in\IT$ and $0=t_0 < \cdots< t_n = t$ be a discrete time
partition for
$n \in\N$, which yields a recursive representation of the
solution~$X$ of
equation~(\ref{eqstochheateqn}).
Furthermore assume that there exist
$\ell_0 \in\N$, $\ga> 0$ and a constant~$C>0$
such that the angular power spectrum $(A_\ell, \ell\in\N_0)$
satisfies $A_\ell\le C \cdot\ell^{-\ga}$ for all $\ell> \ell_0$.
Then the error of the approximate solution $X^\gk$ is bounded
uniformly in time and
independently of the time discretization by
\[
\bigl\llVert X(t) - X^\gk(t)\bigr\rrVert_{L^2(\gO;L^2(\IS^2))} \le\hat{C}
\cdot\gk^{-\ga/2}
\]
for all $\gk\ge\ell_0$, where
\[
\hat{C}^2 = \llVert X_0\rrVert_{L^2(\gO;L^2(\IS^2))}^2
+ C \cdot\biggl(\frac{2}{\ga} + \frac{1}{\ga+1} \biggr).
\]
\end{lemma}

\begin{pf}
Let $t \in\IT$ and $0=t_0 < \cdots< t_n = t$ be a partition of $[0,t]$
for some $n \in\N$.
Since $\E(\psi_\ell(j)) = 0$ for all $\ell\in\N_0$ and
$j=0,\ldots,n-1$, we first
observe that
%
\begin{eqnarray}
\label{eqerrorapproxstochheat}
&& \bigl\llVert X(t_n) - X^\gk(t_n)
\bigr\rrVert_{L^2(\gO;L^2(\IS^2))}^2\nonumber
\\
&&\qquad  = \Biggl\llVert\sum_{\ell=\gk+1}^\infty\sum
_{m=-\ell}^\ell e^{-\ell
(\ell+1)t_n}(X_0,Y_{\ell m})_{L^2(\IS^2)}
Y_{\ell m} \nonumber
\\
&&\hspace*{62pt}{} + \sum_{\ell=\gk+1}^\infty\sum
_{j=0}^{n-1} e^{-\ell(\ell+1)\sum_{i=j+1}^{n-1} h_i}
\psi_\ell(j)\Biggr\rrVert_{L^2(\gO;L^2(\IS^2))}^2
\\
&&\qquad  = \Biggl\llVert\sum_{\ell=\gk+1}^\infty\sum
_{m=-\ell}^\ell e^{-\ell
(\ell+1)t_n}(X_0,Y_{\ell m})_{L^2(\IS^2)}
Y_{\ell m}\Biggr\rrVert_{L^2(\gO
;L^2(\IS^2))}^2\nonumber
\\
&&\quad\qquad {}+ \Biggl\llVert\sum_{\ell=\gk+1}^\infty
\sum_{j=0}^{n-1} e^{-\ell(\ell+1)\sum_{i=j+1}^{n-1} h_i}
\psi_\ell(j)\Biggr\rrVert_{L^2(\gO;L^2(\IS^2))}^2.\nonumber
\end{eqnarray}
We define an isotropic Gaussian random field as in Lemma~\ref
{lemsimulatesequenceexpT} by
\begin{eqnarray*}
T&:=& \sum_{\ell=0}^\infty\sqrt{A_\ell}
\gs_{\ell t_{n}} \Biggl(  X^1_{\ell0} L_{\ell0}(
\vartheta)
\\
&&\hspace*{59pt}{} + \sqrt{2} \sum_{m=1}^\ell
L_{\ell m}(\vartheta) \bigl( X^1_{\ell m} \cos(m
\varphi) + X^2_{\ell m} \sin(m\varphi) \bigr) \Biggr)
\end{eqnarray*}
with angular power spectrum $(A_\ell\gs_{\ell t_n}^2, \ell\in\N_0)$
and denote similarly to Section~\ref{secapproxiGRF} by $T^\gk$ the
truncated series expansion.
Then
\begin{eqnarray*}
&& \Biggl\llVert\sum_{\ell=\gk+1}^\infty\sum
_{j=0}^{n-1} e^{-\ell(\ell+1)
\sum_{i=j+1}^{n-1} h_i}
\psi_\ell(j)\Biggr\rrVert_{L^2(\gO;L^2(\IS^2))}^2
\\
&&\qquad = \bigl\llVert
T - T^\gk\bigr\rrVert_{L^2(\gO;L^2(\IS^2))}^2.
\end{eqnarray*}
The angular power spectrum satisfies with the made assumptions that
\[
A_\ell\gs_{\ell t_n}^2 = A_\ell
\frac{1}{2\ell(\ell+1)}\bigl(1- e^{-2\ell(\ell+1)t_n}\bigr) \le C \ell
^{-\ga}
\ell^{-2} \cdot1 = C \ell^{-(\ga+ 2)}.
\]
With these parameters we apply Proposition~\ref{propconvtruncKLexpGRF}
to the difference of~$T$ and~$T^\gk$ which yields
\[
\bigl\llVert T - T^\gk\bigr\rrVert_{L^2(\gO;L^2(\IS^2))}^2
\le\hat{C}^2 \gk^{-\ga} = C \cdot\biggl(\frac{2}{\ga} +
\frac{1}{\ga+1} \biggr)\gk^{-\ga}.
\]
The first term on the right-hand side of the last equality of~(\ref
{eqerrorapproxstochheat}) is
bounded by
\begin{eqnarray*}
&& \Biggl\llVert\sum_{\ell=\gk+1}^\infty\sum
_{m=-\ell}^\ell e^{-\ell
(\ell+1)t_n}(X_0,Y_{\ell m})_{L^2(\IS^2)}
Y_{\ell m}\Biggr\rrVert_{L^2(\gO
;L^2(\IS^2))}^2
\\
&& \qquad= \sum_{\ell=\gk+1}^\infty\sum
_{m=-\ell}^\ell e^{-2\ell
(\ell+1)t_n}\bigl\llVert
(X_0,Y_{\ell m})_{L^2(\IS^2)} Y_{\ell m}\bigr
\rrVert_{L^2(\gO
;L^2(\IS^2))}^2
\\
&&\qquad \le e^{-2(\gk+1)(\gk+2)t_n} \llVert X_0\rrVert
_{L^2(\gO;L^2(\IS^2))}^2.
\end{eqnarray*}
This converges faster than any polynomial, and in particular,
it can be bounded by $C\gk^{-\ga}$, implying that
\[
\bigl\llVert X(t_n) - X^\gk(t_n)\bigr\rrVert
_{L^2(\gO;L^2(\IS^2))}^2 \le C \biggl( \biggl(\frac{2}{\ga} +
\frac{1}{\ga+1} \biggr)+ \llVert X_0\rrVert_{L^2(\gO;L^2(\IS^2))}^2
\biggr)\gk^{-\ga}.
\]
This completes the proof.
\end{pf}
We remark that it is not necessary that the angular power spectrum
$(A_\ell, \ell\in\N_0)$ of the $Q$-Wiener process decays with rate
$\ell^{-\ga}$ for $\ga>2$,
but that it is sufficient to assume that $\ga> 0$.
In an implementation in MATLAB we verified the theoretical results of
Lemma~\ref{lemconvstochheat}. We took as ``exact'' solution the
approximate solution at time $T=1$ with $\gk=2^7$ (since for larger
$\gk$ the elements of the angular power spectrum~$A_\ell$ and
therefore the increments were so small that MATLAB failed to calculate
the series expansion). We calculated the solution in one time step since
we have shown in Lemma~\ref{lemconvstochheat} that the convergence
rate is independent of the number of calculated time steps. Instead of
the $L^2(\IS^2)$ error in space, we used the maximum over all grid
points which is a stronger error. In Figure~\ref{figstochheatL2error}
the results and the theoretical convergence rates are shown for $\ga=
1,3,5$. One observes that the simulation results match the theoretical
results from Lemma~\ref{lemconvstochheat}.

%
\begin{figure}

\includegraphics{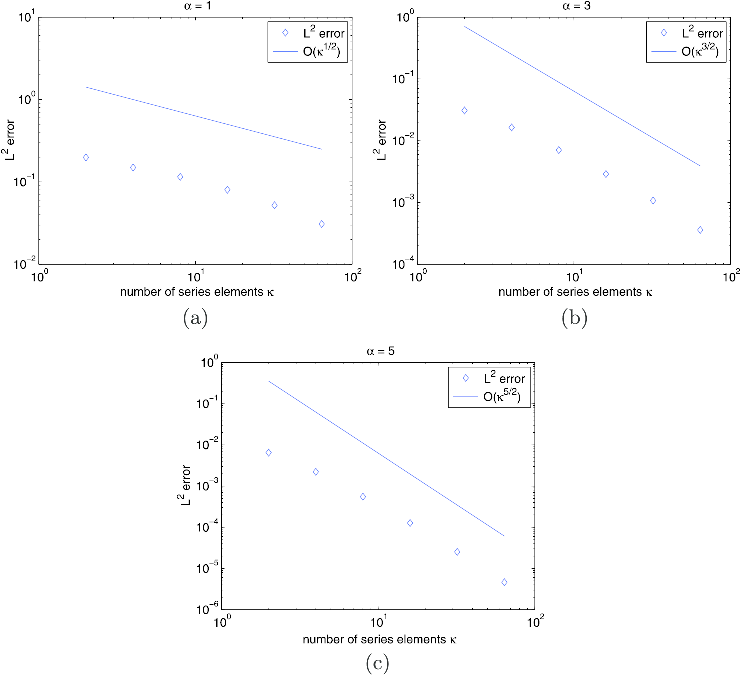}

\caption{Mean square error of the approximation of the stochastic heat
equation with different angular power spectra of the $Q$-Wiener process
and $100$~Monte Carlo samples.
\textup{(a)}~Angular power spectrum with parameter $\ga=1$.
\textup{(b)}~Angular power spectrum with parameter $\ga=3$.
\textup{(c)}~Angular power spectrum with parameter $\ga=5$.}\label{figstochheatL2error}
\end{figure}

Similarly to the proof of almost sure convergence of approximations of
isotropic Gaussian random fields in Section~\ref{secapproxiGRF}, we
need a $L^p$ convergence result for the approximation of the solution
of the stochastic heat equation to show pathwise convergence. This is
proven in
the following by a combination of Theorem~\ref{thmiGRFLpconv} and
Lemma~\ref{lemconvstochheat}.

\begin{lemma}\label{lemLpconvstochheat}
Let $t \in\IT$ and $0=t_0 < \cdots< t_n = t$ be a discrete time partition
for $n \in\N$, which yields a recursive representation of the
solution~$X$ of
equation~(\ref{eqstochheateqn}).
Furthermore assume that there exist $\ell_0 \in\N$, $\ga> 0$ and a
constant~$C>0$
such that the angular power spectrum $(A_\ell, \ell\in\N_0)$ satisfies
$A_\ell\le C \cdot\ell^{-\ga}$ for all $\ell> \ell_0$.
Then the error of the approximate solution $X^\gk$ is bounded uniformly
in time and independently of the time discretization by
\[
\bigl\llVert X(t) - X^\gk(t)\bigr\rrVert_{L^p(\gO;L^2(\IS^2))} \le
\hat{C}_p \cdot\gk^{-\ga/2}
\]
for all $p > 0$ and $\gk\ge\ell_0$, where $\hat{C}_p$ is a constant
that depends on the initial condition $\llVert X_0\rrVert _{L^{\max
(p,2)}(\gO
;L^2(\IS^2))}$, $p$, $C$ and $\ga$.
\end{lemma}

\begin{pf}
The result follows for $p \le2$ with Lemma~\ref{lemconvstochheat} and
with H\"older's inequality.
So we assume that $p>2$ from here on.
Let $t \in\IT$ and $0=t_0 < \cdots< t_n = t$ be a partition of $[0,t]$
for some $n \in\N$.
We first observe that
\begin{eqnarray*}
&& \bigl\llVert X(t_n) - X^\gk(t_n)\bigr
\rrVert_{L^p(\gO;L^2(\IS^2))}
\\
&& \qquad\le\Biggl\llVert\sum_{\ell=\gk+1}^\infty
\sum_{m=-\ell}^\ell e^{-\ell(\ell+1)t_n}(X_0,Y_{\ell m})_{L^2(\IS^2)}
Y_{\ell m}\Biggr\rrVert_{L^p(\gO;L^2(\IS^2))}
\\
&&\quad\qquad{} + \Biggl\llVert\sum_{\ell=\gk+1}^\infty
\sum_{j=0}^{n-1} e^{-\ell(\ell+1)\sum_{i=j+1}^{n-1} h_i}
\psi_\ell(j)\Biggr\rrVert_{L^p(\gO;L^2(\IS^2))}.
\end{eqnarray*}
Similarly to the proof of Lemma~\ref{lemconvstochheat}, the second
term is equal to the $L^p$ norm of the approximation error of an
isotropic Gaussian random field with angular power spectrum
$(A_\ell\gs_{\ell t_n}^2, \ell\in\N_0)$, which satisfies by
Theorem~\ref{thmiGRFLpconv} that
\begin{eqnarray*}
&& \Biggl\llVert\sum_{\ell=\gk+1}^\infty\sum
_{j=0}^{n-1} e^{-\ell(\ell
+1)\sum_{i=j+1}^{n-1} h_i}
\psi_\ell(j)\Biggr\rrVert_{L^p(\gO;L^2(\IS
^2))}
\\
&& \qquad= \bigl\llVert T-T^\gk\bigr\rrVert_{L^p(\gO;L^2(\IS^2))}
\\
&& \qquad\le(C_p)^{1/p} C^{1/2} \cdot\biggl(
\frac{2}{\ga-2} + \frac
{1}{\ga-1} \biggr)^{1/2}
\gk^{-\ga/2}.
\end{eqnarray*}
Furthermore the first term satisfies similarly to the proof of
Lemma~\ref{lemconvstochheat} that
\begin{eqnarray*}
&& \Biggl\llVert\sum_{\ell=\gk+1}^\infty\sum
_{m=-\ell}^\ell e^{-\ell
(\ell+1)t_n}(X_0,Y_{\ell m})_{L^2(\IS^2)}
Y_{\ell m}\Biggr\rrVert_{L^p(\gO
;L^2(\IS^2))}
\\
&& \qquad\le e^{-(\gk+1)(\gk+2)t_n} \llVert X_0\rrVert_{L^p(\gO;L^2(\IS^2))},
\end{eqnarray*}
which converges faster than any polynomial and therefore can be bounded by
$C \gk^{-\ga/2}$. This completes the proof.
\end{pf}

\begin{corollary}\label{corstochheateqnP-asconv}
Let $t \in\IT$ and $0=t_0 < \cdots< t_n = t$ be a discrete time
partition for $n \in\N$, which yields a recursive representation of
the solution~$X$ of equation~(\ref{eqstochheateqn}). Furthermore
assume that there exist $\ell_0 \in\N$, $\ga> 0$ and a constant~$C$
such that the angular power spectrum $(A_\ell, \ell\in\N_0)$
satisfies $A_\ell\le C \cdot\ell^{-\ga}$ for all $\ell> \ell_0$.
Then the error of the approximate solution $X^\gk$ is bounded
uniformly in time,
independently of the time discretization and asymptotically in~$\gk$ by
\[
\bigl\llVert X(t) - X^\gk(t)\bigr\rrVert_{L^2(\IS^2)} \le
\gk^{-\gb}, \qquad\IP\mbox{-a.s.}
\]
for all $\gb< \ga/2$.
\end{corollary}

\begin{pf}
The proof is similar to the one for isotropic Gaussian random fields in
Corollary~\ref{coriGRFP-asconv} but for completeness we include it here.
Let $\gb< \ga/2$. Then the Chebyshev inequality and Lemma~\ref
{lemLpconvstochheat} imply that
\begin{eqnarray*}
&&\IP\bigl(\bigl\llVert X(t)-X^\gk(t)\bigr\rrVert_{L^2(\IS^2)} \ge
\gk^{-\gb}\bigr)
\\
&&\qquad  \le\gk^{\gb p} \E\bigl(\bigl\llVert
X(t)-X^\gk(t)\bigr\rrVert_{L^2(\IS^2)}^p\bigr)
\\
&&\qquad  \le\hat{C}_p^p \gk^{(\gb-\ga/2)p}.
\end{eqnarray*}
For all $p > (\ga/2-\gb)^{-1}$, the series
\[
\sum_{\gk=1}^\infty\gk^{(\gb-\ga/2)p} < +
\infty
\]
converges, and therefore the Borel--Cantelli lemma implies the claim.\vadjust{\goodbreak}
\end{pf}

In Figure~\ref{figstochheatpatherror}, we show the corresponding error
plots to Figure~\ref{figstochheatL2error}, but instead of a Monte
Carlo simulation of the approximate $L^2(\gO;L^2(\IS^2))$ error, we
plot the error of one path of the stochastic heat equation. The
convergence results coincide with the theoretical results in
Corollary~\ref{corstochheateqnP-asconv}.

\begin{figure}

\includegraphics{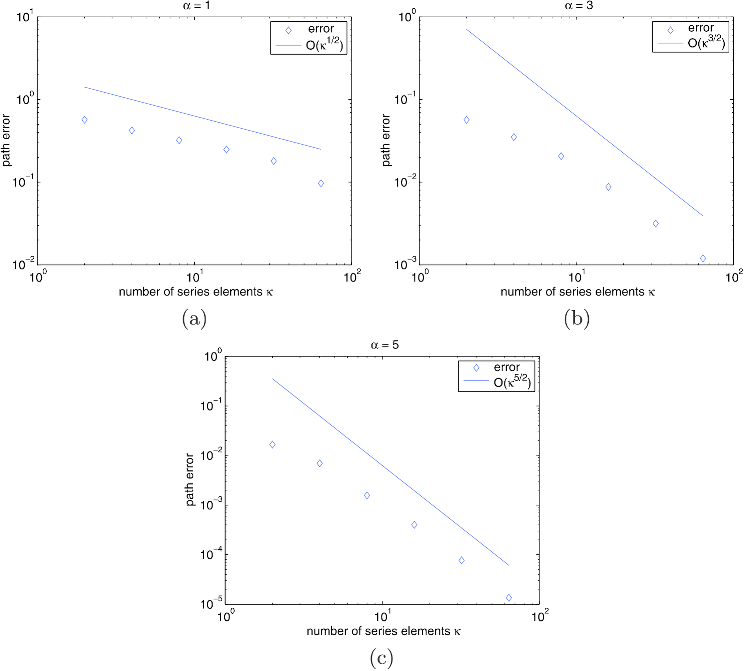}

\caption{Error of the approximation of a path of the stochastic heat
equation with different angular power spectra of the $Q$-Wiener process.
\textup{(a)}~Angular power spectrum with parameter $\ga=1$.
\textup{(b)}~Angular power spectrum with parameter $\ga=3$.
\textup{(c)}~Angular power spectrum with parameter $\ga=5$.}\label{figstochheatpatherror}
\end{figure}

\begin{appendix}
\section*{Appendix: Interpolation spaces}\label{appinterpolspaces}

In this \hyperref[appinterpolspaces]{Appendix} we give a more detailed introduction to interpolation
spaces than in Section~\ref{secPowSpecDec} and show 
that they are independent of the chosen interpolation couple.

We consider the sequence of spaces $(V^n(-1,1), n \in\N_0)$ that was
introduced in Section~\ref{secPowSpecDec} and
start now with the definition of fractional order spaces by the real
method of interpolation; see, for example, \citet{Triebel95}, Chapter~1.
We observe that for any two integers $k,n\in\N_0$ with $0\leq k < n$
the pair $(V^k(-1,1),V^n(-1,1))$ is an interpolation couple with
$V^n(-1,1) \subset V^k(-1,1) \subset L^2(-1,1)$.
For integers $k,m,n\in\N_0$ with $0\leq k < m < n$
so that $0<\theta:= (m-k)/(n-k) <1$,
we may therefore define the intermediate space
at ``fine-index'' $q\in[1,+\infty]$
\[
B^{m,(k,n)}_{2,q}(-1,1) = \bigl( V^k(-1,1),
V^n(-1,1) \bigr)_{\theta,q}
\]
by the real method of interpolation as introduced in \citet
{Triebel95}, Chapter~1.
Then these spaces
are equipped with the usual norms
$\llVert \cdot\rrVert _{B^{m,(k,n)}_{2,q}(-1,1)}$
given by
\[
\llVert u \rrVert_{B^{m}_{2,q}(-1,1)} = \cases{ \displaystyle\biggl(
\int_0^\infty t^{-\theta q} \bigl\llvert
K(t,u) \bigr\rrvert^q \,\frac{dt}{t} \biggr)^{1/q}, &
\quad for $1\leq q < + \infty$,
\vspace*{3pt}\cr
\displaystyle\sup_{t>0}
t^{-\theta} K(t,u), &\quad for $q = + \infty$,}
\]
where the $K$-functional is defined by
\[
K(t,u) = \inf_{u = v+w} \bigl( \llVert v \rrVert_{V^k(-1,1)}
+ t \llVert w \rrVert_{V^{n}(-1,1)} \bigr)
\]
for $t>0$.
We observe that in particular for every $n\in\N_0$
the pair of spaces
$(V^n(-1,1),V^{n+1}(-1,1))$ is
an interpolation couple.
Therefore, with $n \in\N_0$ and for $1\leq q \leq+\infty$,
we may extend the family
of exact interpolation spaces $(B^{m,(k,n)}_{2,q}(-1,1))_{0\le k < m <
n, q \in[1,+\infty]}$
also to\vspace*{2pt} noninteger numbers $s = n + \theta$, $\theta\in(0,1)$,
via
\[
B^{n+\theta}_{2,q}(-1,1):= \bigl(V^n(-1,1),
V^{n+1}(-1,1)\bigr)_{\theta,q}.
\]
Let us from here on simplify the notation and denote $V^n(-1,1)$
by~$V^n$ and $B^{m,(k,n)}_{2,q}(-1,1)$ by~$B^{m,(k,n)}_{2,q}$.
Our\vspace*{1pt} next proposition states that for $q = 2$ and $m \in\N$, the Besov
spaces $B^{m,(k,n)}_{2,2}$ are equal to $V^m$ for any choice $k<m<n$.

\begin{proposition}\label{propB=V}
Let $m \in\N$ be given. For any $k,n \in\N_0$ with $0 \le k < m <
n$, it holds that
$B^{m,(k,n)}_{2,2} = V^m$.
\end{proposition}

\begin{pf}
This result is classical; see, for example, \citet{Triebel95},
\citet{RunstSickel80} or \citet{SchmeisserTriebel87}, Chapter~6.5
and the references therein.
We present the detailed argument here for completeness.

By Theorem~\ref{thmBesovEquivNorm} we already know that the norm in
$V^m$ is equivalent to the weighted square summability of the
coefficients of the Fourier--Legendre expansion if $m \in\N_0$, so it
is sufficient to show the equivalence of the $B^{m,(k,n)}_{2,2}$-norm
and the convergence of the sum for all $0 \le k <m<n$.

Therefore we choose any $k,n \in\N_0$ with $0 \le k <m<n$ and $u \in
L^2(-1,1)$. Then $u$ admits the Fourier--Legendre expansion
\[
u = \sum_{\ell=0}^\infty u_\ell
\frac{2\ell+ 1}{2} P_\ell
\]
as has been seen above. Consider now $u \in V^m \cup B^{m,(k,n)}_{2,2}
\subset V^k$. We split $u$ into the sum $v+w$ with $v \in V^k$ and $w
\in V^n$ and the series expansions
\[
v = \sum_{\ell=0}^\infty(u_\ell-
w_\ell) \frac{2\ell+ 1}{2} P_\ell\quad\mbox{and}\quad w =
\sum_{\ell=0}^\infty w_\ell
\frac{2\ell+ 1}{2} P_\ell.
\]
Then Theorem~\ref{thmBesovEquivNorm} for integers implies that
\begin{eqnarray*}
K(t,u)^2 & \simeq&\inf_{u = v+w} \bigl(\llVert v
\rrVert_{V^k}^2 + t^2 \llVert w\rrVert
_{V^n}^2\bigr)
\\
& \simeq&\inf_{u = v+w} \sum_{\ell=0}^\infty
\frac{2\ell+1}{2} \bigl((u_\ell- w_\ell)^2
\bigl(1+\ell^{2k}\bigr) + w_\ell^2 \bigl(1+
\ell^{2n}\bigr) \bigr).
\end{eqnarray*}
We observe further that the infimum over all $u = v+w$ is equal to the
infimum over all square summable sequences $(w_\ell)_{\ell\in\N_0}
\in\ell^2(\N_0)$; that is,
\[
K(t,u)^2 \simeq\inf_{(w_\ell)_{\ell\in\N_0} \in\ell^2(\N_0)} \sum
_{\ell=0}^\infty\frac{2\ell+1}{2} G_\ell(u_\ell,w_\ell;t,k,n),
\]
where
\[
G_\ell(a,d;t,k,n):= (a-d)^2\bigl(1+\ell^{2k}
\bigr) + t^2 \,d^2\bigl(1+\ell^{2n}\bigr)
\]
is with respect to~$d \in\R$ a quadratic polynomial with positive
leading coefficient for all $\ell\in\N_0$. For $\ell\in\N_0$ its
minimum is attained at
\[
d_\ell:= \frac{a}{1+t^2 g_{kn}(\ell)},
\]
where
\[
g_{kn}(\ell):= \frac{1+\ell^{2n}}{1+\ell^{2k}}\geq1.
\]
This implies that
\begin{eqnarray*}
K(t,u)^2 & \simeq&\sum_{\ell=0}^\infty
\frac{2\ell+1}{2} \bigl((u_\ell- d_\ell)^2
\bigl(1+\ell^{2k}\bigr) + d_\ell^2 \bigl(1+
\ell^{2n}\bigr) \bigr)
\\
& = &\sum_{\ell=0}^\infty\frac{2\ell+1}{2}
u_\ell^2 \bigl(1+\ell^{2k}
\bigr)t^2 \frac{g_{kn}(\ell)}{1+t^2g_{kn}(\ell)}
\end{eqnarray*}
and leads with the definition of the norm and the theorem of
Fubini--Tonelli to
\begin{eqnarray*}
\llVert u\rrVert_{B^{m,(k,n)}_{2,2}}^2 & = &\int_0^\infty
t^{-2\theta} K(t,u)^2 \,\frac{dt}{t}
\\
& \simeq&\sum_{\ell=0}^\infty\frac{2\ell+1}{2}
u_\ell^2 \bigl(1+\ell^{2k}\bigr) \int
_0^\infty t^{-(2\theta+1)} \frac{t^2 g_{kn}(\ell
)}{1+t^2g_{kn}(\ell)} \,dt
\\
& = &\sum_{\ell=0}^\infty\frac{2\ell+1}{2}
u_\ell^2 \bigl(1+\ell^{2k}\bigr)
g_{kn}(\ell) \int_0^\infty
\frac{t^{1-2\theta}}{1+t^2g_{kn}(\ell)} \,dt,
\end{eqnarray*}
where $\theta:= (m-k)/(n-k) \in(0,1)$.
To complete the proof it remains to show that
\begin{eqnarray*}
&& \sum_{\ell=0}^\infty\frac{2\ell+1}{2}
u_\ell^2 \bigl(1+\ell^{2k}\bigr)
g_{kn}(\ell) \int_0^\infty
\frac{t^{1-2\theta}}{1+t^2g_{kn}(\ell)} \,dt
\\
&&\qquad \simeq\sum_{\ell=0}^\infty
u_\ell^2 \frac{2\ell+1}{2} \bigl(1+\ell^{2m}
\bigr)
\end{eqnarray*}
by the integer version of Theorem~\ref{thmBesovEquivNorm}; that is, we
have to prove the equivalence
\[
\bigl(1+\ell^{2k}\bigr) g_{kn}(\ell) \int
_0^\infty\frac{t^{1-2\theta}}{1+t^2g_{kn}(\ell)} \,dt \simeq1+
\ell^{2m} = 1 + \ell^{2((1-\theta)k + \theta n)}.
\]
Therefore let us split the integral first into
\begin{eqnarray*}
\int_0^\infty\frac{t^{1-2\theta}}{1+t^2g_{kn}(\ell)} \,dt &=& \int
_0^{g_{kn}(\ell)^{-1/2}} \frac{t^{1-2\theta
}}{1+t^2g_{kn}(\ell)} \,dt
\\
&&{} + \int
_{g_{kn}(\ell)^{-1/2}}^\infty\frac{t^{1-2\theta
}}{1+t^2g_{kn}(\ell)} \,dt
\end{eqnarray*}
and bound the two terms on the right-hand side from below and from
above by
\begin{eqnarray*}
\frac{1}{2} \int_0^{g_{kn}(\ell)^{-1/2}}
t^{1-2\theta} \,dt & \le&\int_0^{g_{kn}(\ell)^{-1/2}}
\frac{t^{1-2\theta
}}{1+t^2g_{kn}(\ell)} \,dt
\\
&\le& \int_0^{g_{kn}(\ell)^{-1/2}}
t^{1-2\theta} \,dt
\\
& = &\frac{1}{2-2\theta} g_{kn}(\ell)^{\theta- 1}
\end{eqnarray*}
and
\begin{eqnarray*}
\frac{1}{2g_{kn}(\ell)}\int_{g_{kn}(\ell)^{-1/2}}^\infty
t^{-1-2\theta} \,dt & \le&\int_{g_{kn}(\ell)^{-1/2}}^\infty
\frac{t^{1-2\theta
}}{1+t^2g_{kn}(\ell)} \,dt
\\
& \le&\frac{1}{g_{kn}(\ell)}\int_{g_{kn}(\ell)^{-1/2}}^\infty
t^{-1-2\theta} \,dt
\\
&=& \frac{1}{2\theta} g_{kn}(\ell)^{\theta- 1}.
\end{eqnarray*}
This implies overall that
\begin{eqnarray*}
\frac{1}{4(1-\theta)\theta} g_{kn}(\ell)^{\theta- 1} &\le&\int
_0^\infty\frac{t^{1-2\theta}}{1+t^2g_{kn}(\ell)} \,dt
\\
&\le&
\frac{1}{2(1-\theta)\theta} g_{kn}(\ell)^{\theta- 1}
\end{eqnarray*}
and moreover that
\begin{eqnarray*}
\bigl(1+\ell^{2k}\bigr) g_{kn}(\ell) \int
_0^\infty\frac{t^{1-2\theta}}{1+t^2g_{kn}(\ell)} \,dt &\simeq&\bigl(1+
\ell^{2k}\bigr) g_{kn}(\ell)^\theta
\\
&=& \bigl(1+
\ell^{2k}\bigr)^{1-\theta} \bigl(1 + \ell^{2n}
\bigr)^\theta.
\end{eqnarray*}
We observe that the function $x^p$, $p \in(0,1)$ is concave on $\R_+$
and satisfies $(x+y)^p \ge2^{p-1} (x^p + y^p)$, which implies finally that
\begin{eqnarray*}
\bigl(1+\ell^{2k}\bigr)^{1-\theta} \bigl(1 + \ell^{2n}
\bigr)^\theta&\simeq&\bigl(1+\ell^{2(1-\theta)k}\bigr) \bigl(1 +
\ell^{2\theta n}\bigr)
\\
&\simeq&1+\ell^{2((1-\theta)k + \theta n)}
\\
&= &1 +
\ell^{2m}.
\end{eqnarray*}
This completes the proof.
\end{pf}

Based on Proposition~\ref{propB=V}, it is clear that one can use for every
$m\in\N$ in place of $B^{m,(k,n)}_{2,2}$ simply $V^m$.
Moreover, for fractional $\eta= n +\theta$ with $n\in\N_0$ and
some $0<\theta<1$, we write also $V^\eta$ in place of $B^\eta_{2,2}$.
\end{appendix}

\section*{Acknowledgments}
The authors acknowledge Roman Andreev, Sonja Cox, Markus Hansen,
Sebastian Klein,
Markus Knopf and Hans Triebel for fruitful discussions and helpful comments.
Furthermore we thank an anonymous referee for the very useful comments
that helped to improve and generalize the results.


%

\printaddresses
\end{document}